\newcommand{\NN}{\mathbb{N}}
\newcommand{\PS}{\mathfrak{S}}
\newcommand{\ZZ}{\mathbb{Z}}
\DeclareMathOperator{\charf}{char}
\DeclareMathOperator{\Hilb}{Hilb}
\DeclareMathOperator{\gin}{gin}
\DeclareMathOperator{\inid}{in}
\DeclareMathOperator{\indeg}{indeg}
\DeclareMathOperator{\reg}{reg}
\DeclareMathOperator{\sgn}{sgn}
\newcommand{\st}{\; | \;}                                     
\def\urltilda{\kern -.15em\lower .7ex\hbox{\~{}}\kern .04em}  
\numberwithin{figure}{section}
\numberwithin{equation}{section}
\newtheorem{theorem}{Theorem}[section]
\newtheorem{lemma}[theorem]{Lemma}
\newtheorem{proposition}[theorem]{Proposition}
\newtheorem{corollary}[theorem]{Corollary}
\newtheorem{conjecture}[theorem]{Conjecture}
\theoremstyle{definition}
\newtheorem{remark}[theorem]{Remark}
\newtheorem{example}[theorem]{Example}
\newtheorem*{acknowledgement}{Acknowledgement}
\begin{document}

\title{The strong Lefschetz property in codimension two}
\author[D.\ Cook II]{David Cook II}
\address{Department of Mathematics, University of Notre Dame, Notre Dame, IN 46556, USA}
\email{\href{mailto:dcook8@nd.edu}{dcook8@nd.edu}}
\subjclass[2010]{13A35, 13E10}
\keywords{Strong Lefschetz property, positive characteristic, lexsegment ideals}

\begin{abstract}
    Every artinian quotient of $K[x,y]$ has the strong Lefschetz property if $K$
    is a field of characteristic zero or is an infinite field whose characteristic is
    greater than the regularity of the quotient.  We improve this bound in the case
    of monomial ideals.  Using this we classify when both bounds are sharp.
    Moreover, we prove that the artinian quotient of a monomial ideal in $K[x,y]$ always
    has the strong Lefschetz property, regardless of the characteristic of the field, exactly when
    the ideal is lexsegment.  As a consequence we describe a family of non-monomial complete
    intersections that always have the strong Lefschetz property.
\end{abstract}

\maketitle

\section{Introduction}\label{sec:intro}

Let $K$ be an infinite field of arbitrary characteristic, and let $I$ be a homogeneous artinian ideal in $S = K[x_1, \ldots, x_n]$.
The quotient $S/I$ is said to have the \emph{strong Lefschetz property} if there exists a linear form $\ell \in [S/I]_1$ such that for
all integers $d \geq 0$ and $t \geq 1$ the map $\times \ell^t : [S/I]_d \rightarrow [S/I]_{d+t}$ has maximal rank.  In this
case, $\ell$ is called a \emph{strong Lefschetz element} of $S/I$.  If the maps have maximal rank for $t = 1$, then $S/I$ is said to have
the \emph{weak Lefschetz property}, and $\ell$ is called a \emph{weak Lefschetz element} of $S/I$.

The Lefschetz properties have been studied extensively; see the recent survey by Migliore and Nagel~\cite{MN} and the references
contained therein.  The interest in these properties largely stems from constraints on the Hilbert
functions of quotients that have the weak or strong Lefschetz property (see, e.g., \cite{BMMNZ, HMNW, MZ}).

Until recently, most results have focused on characteristic zero or on at least three variables.  For artinian quotients of
$K[x,y]$, this is not without reason:  the weak Lefschetz property always holds, regardless of characteristic.
This was explicitly proven for characteristic zero by Harima, Migliore, Nagel, and Watanabe in~\cite[Proposition~4.4]{HMNW}
(see the note following the next theorem for more on the characteristic zero case).  It was proven for arbitrary characteristic 
by Migliore and Zanello in~\cite[Corollary~7]{MZ}, though it was not specifically stated therein as noted by Li and Zanello 
in~\cite[Remark~2.6]{LZ} (see also~\cite{CN}).

\begin{theorem}{\cite[Corollary~7]{MZ}} \label{thm:wlp}
    Every artinian ideal in $K[x,y]$ has the weak Lefschetz property, regardless of the characteristic of $K$.
\end{theorem}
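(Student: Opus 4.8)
The plan is to produce a single linear form that is a weak Lefschetz element by controlling the cokernels of multiplication. Assume $(0) \neq I \neq S$, the remaining cases being vacuous, and write $A = S/I$ (so $S = K[x,y]$). For a linear form $\ell \in [S]_1$, set $B = A/\ell A = S/(I + \ell S)$. The key structural observation is that $B$ is a quotient of $S/\ell S \cong K[t]$, a polynomial ring in one variable, by a homogeneous ideal; hence $B \cong K[t]/(t^m)$ for some $m \in \NN \cup \{\infty\}$, and its Hilbert function is $1, 1, \dots, 1, 0, 0, \dots$, equal to $1$ in degrees $0, \dots, m-1$ and $0$ afterward. In particular, once $\dim_K [B]_j = 0$ it stays $0$ for all larger $j$. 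Since $[B]_{d+1}$ is precisely the cokernel of $\times\ell \colon [A]_d \to [A]_{d+1}$, this map is surjective exactly when $[B]_{d+1} = 0$.

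Next I would collapse the infinitely many maximal-rank conditions to a single one. Let $r = \indeg(I)$ (note $r \ge 1$ as $I \neq S$). For $d \le r-2$ we have $[A]_d = [S]_d$ and $[A]_{d+1} = [S]_{d+1}$, so $\times\ell$ is multiplication on the domain $S$ and is injective; an injection automatically has rank $\min(\dim_K[A]_d, \dim_K[A]_{d+1})$, so these maps have maximal rank for any nonzero $\ell$. For $d \ge r-1$ it suffices to make $\times\ell$ surjective: a surjection forces $\dim_K[A]_{d+1} \le \dim_K[A]_d$ and therefore already has maximal rank. By the previous paragraph, surjectivity for all $d \ge r-1$ is equivalent to $[B]_j = 0$ for all $j \ge r$, and by the monotone vanishing of the Hilbert function of $B$ this reduces to the single condition $[B]_r = 0$. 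Thus no separate unimodality argument for the Hilbert function of $A$ is needed: each one-sided statement supplies the favorable dimension inequality on its own.

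It then remains to choose $\ell$ with $[B]_r = 0$, i.e.\ with $[I]_r + \ell[S]_{r-1} = [S]_r$. Because $\times\ell$ is injective on $S$, the subspace $\ell[S]_{r-1}$ of degree-$r$ forms divisible by $\ell$ has dimension $r$ and is therefore a hyperplane in the $(r+1)$-dimensional space $[S]_r$; so the condition is equivalent to $[I]_r \not\subseteq \ell[S]_{r-1}$, that is, to $[I]_r$ containing a form not divisible by $\ell$. Fix any nonzero $g \in [I]_r$, which exists since $r = \indeg(I)$. The form $g$ has at most $r$ pairwise non-proportional linear factors, so $\ell \nmid g$ for every $\ell$ outside a finite subset of $\mathbb{P}([S]_1)$; since $K$ is infinite, such $\ell$ exist and any one of them is a weak Lefschetz element of $A$. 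I would stress that the characteristic of $K$ plays no role anywhere, which is exactly the content of the theorem.

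The step I expect to carry the weight is the reduction in the second paragraph: recognizing that identifying the cokernel with a one-variable quotient makes its vanishing monotone, so that the single condition $[B]_r = 0$ governs maximal rank simultaneously in all degrees, and that injectivity and surjectivity each force the correct inequality without any appeal to the shape of the Hilbert function of $A$. Once this is in place, the genericity argument that kills $[B]_r$ is routine.
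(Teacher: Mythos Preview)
The paper does not supply its own proof of this statement; it is quoted as \cite[Corollary~7]{MZ} and used as a standing input. Your argument is correct and self-contained. The reduction is exactly the right one: identifying all cokernels at once with the graded pieces of $B=A/\ell A$, a homogeneous quotient of $K[t]$, makes their vanishing monotone, so the single condition $[B]_r=0$ forces surjectivity of $\times\ell$ in every degree $\ge r$, while below $r$ the maps are those of $S$ itself and hence injective. Your observation that each of ``injective'' and ``surjective'' already implies the favorable dimension inequality, so no prior knowledge of the shape of $h_{R/I}$ is needed, is clean and worth keeping. The genericity step---avoiding the at most $r$ linear factors of a fixed nonzero $g\in[I]_r$---uses only that $K$ is infinite, which is the paper's standing hypothesis, and the characteristic indeed never enters. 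This is essentially the classical one-variable-quotient argument underlying the cited references, so there is nothing further to compare: the paper defers to the citation, and you have provided a proof.
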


Further still, the strong Lefschetz property is known to hold when the characteristic is zero or greater
than the regularity of the quotient.

\begin{theorem} \label{thm:slp-char0-reg}
    Let $I$ be a homogeneous artinian ideal in $R = K[x,y]$, where $K$ is a field of characteristic $p \geq 0$.
    Then $R/I$ has the strong Lefschetz property if $p = 0$ or $p > \reg{R/I}$.
\end{theorem}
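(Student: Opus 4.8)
The plan is to produce a single general linear form $\ell$ and show that $\times \ell^t \colon [R/I]_d \to [R/I]_{d+t}$ has maximal rank for every $d \geq 0$ and $t \geq 1$. First I would record the structural features special to codimension two. Since $\dim_K R_d = d+1$, the Hilbert function $h_d = \dim_K [R/I]_d$ satisfies $h_{d+1} \leq h_d + 1$, and a short Macaulay-growth argument shows that $h$ increases by exactly one up to its maximum value $h_{\max}$ and is non-increasing thereafter. Because $R/I$ has the weak Lefschetz property by Theorem~\ref{thm:wlp}, a general $\ell$ gives $\times \ell \colon [R/I]_{d-1} \to [R/I]_d$ maximal rank for all $d$, so $\dim_K (R/(I+\ell R))_d = \max(0, h_d - h_{d-1})$ is supported precisely in the strictly increasing range. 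Hence $R/I$ is generated as a $K[\ell]$-module in the consecutive degrees $0, 1, \dots, h_{\max}-1$, and the strong Lefschetz property is reduced to controlling the sizes of the Jordan blocks of $\times \ell$, equivalently the ranks $\rank(\times \ell^t)$ for all $d,t$.

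Next I would pass to the generic situation. By lower semicontinuity the rank of $\times \ell^t$ is maximized on a nonempty Zariski-open set of linear forms, and since only finitely many pairs $(d,t)$ are relevant and $K$ is infinite, it suffices to show that for each $(d,t)$ the generic rank is maximal, i.e.\ the rank computed over $K(s)$ with $\ell = x + sy$ and $s$ transcendental. Writing the map in a monomial basis and expanding $\ell^t = (x+sy)^t = \sum_j \binom{t}{j} s^j x^{t-j} y^j$, the only ingredient through which the characteristic of $K$ can enter is the collection of binomial coefficients $\binom{t}{j}$. The crucial observation is that every map requiring analysis has $d+t$ at most the top nonzero degree of $R/I$; as $R/I$ is artinian this top degree equals $\reg R/I$, so in all relevant cases $t \leq \reg R/I$. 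Under the hypothesis $p = 0$ or $p > \reg R/I$ we then have $t < p$, whence $t!$ is a unit in $K$ and every $\binom{t}{j}$ with $0 \leq j \leq t$ is nonzero.

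The heart of the argument, and the step I expect to be the main obstacle, is to promote ``the binomial coefficients do not vanish'' to ``the characteristic-$p$ generic rank equals the characteristic-zero generic rank.'' I would handle this by a lifting and comparison argument: the maximality of the generic rank is the non-vanishing of a suitable minor of the matrix of $\times \ell^t$, whose entries are $\ZZ$-polynomial expressions in $s$, in the (lifted) structure constants of $R/I$, and in the $\binom{t}{j}$. The delicate point is that reduction modulo $I$ entangles the binomial coefficients with the defining coefficients of the ideal, so one cannot simply strip them off as row and column scalars; the task is to show that when the $\binom{t}{j}$ are invertible the relevant minor specializes from its characteristic-zero value without dropping rank. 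Granting this comparison, the generic rank agrees with the rank over a field of characteristic zero, where it is maximal by \cite[Proposition~4.4]{HMNW}. It then follows that a general $\ell$ realizes maximal rank for every $\times \ell^t$ simultaneously, so $\ell$ is a strong Lefschetz element and $R/I$ has the strong Lefschetz property.
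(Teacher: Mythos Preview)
Your proposal correctly isolates the relevant maps (those with $d+t \leq \reg R/I$) and correctly observes that under the hypothesis $t < p$ every binomial coefficient $\binom{t}{j}$ is a unit in $K$. But the step you yourself flag as the ``heart of the argument'' --- promoting invertibility of the matrix \emph{entries} to nonvanishing of the relevant \emph{determinant} modulo $p$ --- is a genuine gap, and as written it cannot be closed. A determinant is an alternating sum of products of the entries, and there is no general principle forcing such a sum to remain nonzero modulo $p$ merely because each summand is a unit; this is precisely where the characteristic can intervene. Your suggested lifting-and-comparison with \cite[Proposition~4.4]{HMNW} faces a further obstacle: an arbitrary homogeneous ideal over a field of characteristic $p$ has no canonical lift to characteristic zero, and even a chosen lift need not preserve the Hilbert function, so the two matrices you wish to compare may not even have the same shape.

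The paper bypasses this difficulty by a structural route. It first replaces $I$ by its generic initial ideal $J = \gin I$, which is monomial with the same Hilbert function and regularity, and invokes Proposition~\ref{pro:slp-gin} to reduce to $R/J$. For a monomial ideal the matrix of $\times (x+y)^t$ between equidimensional components has entries $\binom{t}{b_j-i}$, and the key input is that its determinant admits a closed \emph{product} formula (Lemma~\ref{lem:square}, via \cite{CEKZ}) in which every factor is a positive integer at most $w_{R/J}(\reg R/J)-1 \leq \reg R/J = \reg R/I$. Hence if $p > \reg R/I$, no factor is divisible by $p$ and the determinant is a unit. It is this product structure --- not the invertibility of the individual binomial entries --- that carries the argument; your plan would need either this formula or an equivalent structural statement about the determinant to succeed.
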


This result has a varied history.  The characteristic zero part was first explicitly given by Harima, Migliore, Nagel, 
and Watanabe~\cite[Proposition~4.4]{HMNW}.  Their proof relies on the generic initial ideal being strongly-stable.  Recall
that the generic initial ideal is strongly-stable in characteristic zero but also in characteristics larger than the 
largest exponent of a minimal generator of the ideal (see, e.g., \cite[Proposition~4.2.4(b)]{HH}).
Hence the proof of~\cite[Proposition~4.4]{HMNW} extends to the positive characteristic restriction given above.  Using a different 
approach, Basili and Iarrobino proved a much stronger result~\cite[Theorem~2.16]{BI} which reduces to the theorem as 
stated above.  Further still, Iarrobino has pointed out to us that the characteristic zero part follows from a much 
earlier result of Brian\c{c}on~\cite{Br} and the positive characteristic part follows from an earlier result of 
his own~\cite[Theorem~2.9]{Ia}.

In this paper we consider the presence of the strong Lefschetz property for homogeneous artinian
quotients of $R = K[x,y]$, where the characteristic of $K$ is positive.  In Section~\ref{sec:width} we recall some needed
definitions and introduce the width function of a monomial ideal.  The possible width functions are classified
in Proposition~\ref{pro:classify-width}, which is analogous to Macaulay's Theorem for Hilbert functions.
In Section~\ref{sec:max-rank}, we derive conditions to determine when the multiplication map
$\times \ell^t: [R/I]_d \rightarrow [R/I]_{d+t}$ has maximal rank for monomial quotients of $R$.

Section~\ref{sec:slp} contains the main results of this paper.  In particular, Theorem~\ref{thm:w-bound} bounds
the characteristics in which the strong Lefschetz property can be absent from monomial quotients by means of the
width function.  From this we recover Theorem~\ref{thm:slp-char0-reg} using different techniques than used in~\cite{HMNW} and~\cite{BI}.
Furthermore, we classify when the bounds in Theorem~\ref{thm:w-bound} and Theorem~\ref{thm:slp-char0-reg} are sharp in
Corollary~\ref{cor:w-sharpness} and Corollary~\ref{cor:p-failure-reg-sharpness}, respectively.  In Theorem~\ref{thm:always-slp}, 
we show that a monomial quotient \emph{always} has the strong Lefschetz property if and only if it is an artinian quotient
of a lexsegment ideal.

We close with some observations in Section~\ref{sec:obs}.  We use Proposition~\ref{pro:slp-small-nonmono} to show
that there exist non-monomial complete intersections that always have the strong Lefschetz property, and thus the presence
of the strong Lefschetz property for complete intersections is \emph{not} determined by the ci-type.  In
Sections~\ref{sub:nilps} and~\ref{sub:codim3}, we briefly describe connections to enumerative combinatorics
and the weak Lefschetz property in codimension three, respectively.

Throughout the remainder of this paper $R = K[x,y]$, where $K$ is an infinite field of characteristic $p \geq 0$.

\section{The width function}\label{sec:width}

Let $I$ be a homogeneous ideal of $S = K[x_1, \ldots, x_n]$.  Recall that each component $[S/I]_d$ is a finite dimensional
$K$-vector space, and the \emph{Hilbert function} of $S/I$ is the function $h_{S/I}: \NN_0  \to \NN_0$, where
$h(d) := h_{S/I}(d) := \dim_K [S/I]_{d}$.  If there is an integer $r$ such that $h(i) > 0$ if and only if
$0 \leq i \leq r$, then $S/I$ is said to be \emph{artinian}; in this case, $r$ is the \emph{regularity} of $S/I$ and
is denoted $\reg{S/I}$.  If $S/I$ is artinian and $r = \reg{S/I}$, then we call the finite sequence $(h(0), \ldots, h(r))$,
where $h = h_{S/I}$, the \emph{$h$-vector} of $S/I$.  Further still, the \emph{initial degree} of $I$ is the smallest
degree of a minimal generator of $I$ and is denoted $\indeg{I}$.  Thus $[S/I]_i \cong [S]_i$ for $0 \leq i < \indeg{I}$.

\subsection{Lexsegment ideals \& Macaulay's Theorem}\label{sub:lexseg}~

Suppose $x_1 > \cdots > x_n$ in $S$.  The \emph{lexicographic order} on the monomials in $S$ is given by
$x_1^{a_1} \cdots x_n^{a_n} > x_1^{b_1} \cdots x_n^{b_n}$ if either $\sum_{i=1}^n a_i > \sum_{i=1}^n b_i$
or $\sum_{i=1}^n a_i = \sum_{i=1}^n b_i$ and the leftmost nonzero component of the vector
$(b_1, \ldots, b_n) - (a_1, \ldots, a_n)$ is negative.  On the other hand, the \emph{reverse lexicographic
order} on the monomials in $S$ is given by $x_1^{a_1} \cdots x_n^{a_n} > x_1^{b_1} \cdots x_n^{b_n}$ if either
$\sum_{i=1}^n a_i > \sum_{i=1}^n b_i$ or $\sum_{i=1}^n a_i = \sum_{i=1}^n b_i$ and the rightmost nonzero component of the vector
$(b_1, \ldots, b_n) - (a_1, \ldots, a_n)$ is positive.  Notice that in the case of two variables ($n=2$), these
two orders are the same.

A monomial ideal $I$ in $S$ is \emph{lexsegment in degree $d$} if for any two monomials $u, v \in [I]_d$ and any
monomial $m \in [S]_d$ such that $u \leq m \leq v$ in the lexicographic order, then $m \in I$.  If $I$ is lexsegment
in every degree, then $I$ is said to be a \emph{(completely) lexsegment ideal}.  Further, a lexsegment ideal $I$ is
called an \emph{initial lexsegment ideal} if $x_1^d \in I$ for every degree $d$ such that $[I]_d \neq 0$.

In order to state Macaulay's Theorem (see, e.g., \cite[Theorem~6.3.8]{HH}), we must first define some notation.
Let $a$ and $d$ be positive integers.  The \emph{Macaulay expansion} of $a$ with respect to $d$ is the unique
expansion $a = \binom{a_d}{d} + \cdots + \binom{a_k}{k}$, where $a_d > \cdots > a_k \geq k \geq 1$.  Further,
we define $a^{\langle d \rangle} = \binom{a_d+1}{d+1} + \cdots + \binom{a_k+1}{k+1}$, and we set $0^{\langle d \rangle} = 0$
for all positive integers $d$.

\begin{theorem}[Macaulay's Theorem]\label{thm:mac}
    Let $h: \NN_0 \to \NN_0$ be a function.  The following statements are equivalent:
    \begin{enumerate}
    \item $h$ is the Hilbert function of a standard graded $K$-algebra,
    \item $h$ is the Hilbert function of an initial lexsegment quotient in $h(1)$ variables, and
    \item $h(0) = 1$ and $h(d+1) \leq h(d)^{\langle d \rangle}$ for all $d \geq 1$.
    \end{enumerate}
\end{theorem}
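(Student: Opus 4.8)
The plan is to prove the cycle of implications $(i)\Rightarrow(iii)\Rightarrow(ii)\Rightarrow(i)$. The implication $(ii)\Rightarrow(i)$ needs no argument, since an initial lexsegment quotient is by definition a standard graded $K$-algebra. All of the content lies in the other two implications, and the combinatorial engine driving both is a single exact computation of how a lexsegment quotient grows from one degree to the next.

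For $(i)\Rightarrow(iii)$, the Macaulay growth bound, I would first reduce to the monomial case. Fixing any term order and passing to the initial ideal $\inid(I)$ preserves the Hilbert function, so I may assume $I$ is a monomial ideal; the normalization $h(0)=1$ is then automatic. It remains to prove $h(d+1)\le h(d)^{\langle d\rangle}$. The key structural fact is that among all monomial ideals $I$ with $\dim_K[S/I]_d$ equal to a fixed value, the lexsegment ideal \emph{maximizes} $\dim_K[S/I]_{d+1}$; equivalently, the lex-initial segment of monomials of degree $d$ has the smallest shadow under multiplication by $[S]_1$. I would obtain this by a compression argument, replacing the degree-$d$ and degree-$(d+1)$ strands of $I$ by the corresponding lexsegments and checking that this operation does not decrease $\dim_K[I]_{d+1}$ while preserving $\dim_K[I]_d$. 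Granting extremality, the inequality reduces to the exact growth formula for lexsegments stated below.

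For $(iii)\Rightarrow(ii)$ I would build the initial lexsegment quotient one degree at a time inside the polynomial ring on $h(1)$ variables. In each degree $d$ I let $[L]_d$ consist of the lex-largest $\binom{h(1)+d-1}{d}-h(d)$ monomials, so that the complementary quotient has the prescribed dimension $h(d)$. The hypothesis $h(d+1)\le h(d)^{\langle d\rangle}$ is precisely the inequality needed to guarantee that these segments are nested into an ideal, that is, that $[S]_1\cdot[L]_d\subseteq[L]_{d+1}$; this makes $L$ a genuine ideal that is lexsegment in every degree and realizes $h$ as its Hilbert function.

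The crux of both implications, and the step I expect to be the main obstacle, is the combinatorial lemma controlling lexsegments: if $a=\binom{a_d}{d}+\cdots+\binom{a_k}{k}$ is the Macaulay expansion and $L$ is the initial lexsegment ideal with $\dim_K[S/L]_d=a$, then $\dim_K[S/L]_{d+1}=a^{\langle d\rangle}$. The equality itself is a bookkeeping computation, proved by induction on the number of variables: one peels off the monomials divisible by the largest variable and tracks the effect on the binomial coefficients through repeated use of Pascal's identity $\binom{a_i+1}{i+1}=\binom{a_i}{i}+\binom{a_i}{i+1}$, which is exactly what converts the Macaulay expansion of $a$ into that of $a^{\langle d\rangle}$. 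The genuinely delicate ingredient is the accompanying extremality, the minimal-shadow property, whose proof by compression requires care to verify that each compression step can only shrink the shadow; once this lemma is in hand, the reductions surrounding it are formal.
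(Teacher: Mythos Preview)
The paper does not prove this theorem at all: Macaulay's Theorem is quoted as a background result from the literature, with a reference to \cite[Theorem~6.3.8]{HH}, and no argument is supplied. So there is no proof in the paper to compare your proposal against.

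That said, your outline is the standard proof one finds in the cited reference and elsewhere: reduce to monomial ideals via an initial ideal, establish the extremality of lexsegments (minimal shadow/compression), compute the exact growth of a lexsegment in terms of the Macaulay expansion, and for the converse build the lexsegment ideal degree by degree using the growth bound to verify the ideal condition. As a sketch it is correct; the one place where a referee would press you is the compression step for extremality, which you yourself flag as the delicate point. If you were to write it out in full you would want to state precisely which compression you perform (e.g., compressing with respect to one variable at a time, or the Clements--Lindstr\"om style argument) and verify that each step weakly decreases the shadow.
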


This allows us to immediately classify the Hilbert functions of ideals in two variables.

\begin{proposition}\label{pro:classify-hf}
    Let $h: \NN_0 \to \NN_0$ be a function.  Then $h$ is the Hilbert function of some (proper) homogeneous
    quotient in $R = K[x,y]$ if and only if there exists a nonnegative integer
    $d$ so that $h(j) = j+1$ for $0 \leq j \leq d$ and $h(j) \geq h(j+1) \geq 0$ for all $j \geq d$.
\end{proposition}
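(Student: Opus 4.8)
The plan is to read off the classification directly from Macaulay's Theorem (Theorem~\ref{thm:mac}), once we understand how the operation $a \mapsto a^{\langle d \rangle}$ behaves in the range relevant to two variables. The crucial observation, which I would isolate and verify first, is that if $1 \le c \le d$ then the Macaulay expansion of $c$ with respect to $d$ is the ``all ones'' expansion $c = \binom{d}{d} + \binom{d-1}{d-1} + \cdots + \binom{d-c+1}{d-c+1}$ (a sum of exactly $c$ terms, using $d-c+1 \ge 1$), whence $c^{\langle d \rangle} = \binom{d+1}{d+1} + \binom{d}{d} + \cdots + \binom{d-c+2}{d-c+2} = c$ as well. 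Consequently, in the sub-maximal range $c \le d$ the growth bound of Theorem~\ref{thm:mac}(iii) reads simply $h(d+1) \le h(d)$; that is, as soon as the Hilbert function falls below the maximal value $j+1 = \dim_K[R]_j$, it is forced to be non-increasing. This single computation is what converts Macaulay's inequality into the monotone tail demanded by the statement.

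For the ``only if'' direction I would take a proper quotient, meaning $h = h_{R/I}$ with $0 \ne I \subsetneq R$, and set $d = \indeg{I} - 1 \ge 0$. Since $[I]_j = 0$ for $j < \indeg{I}$, we get $h(j) = j+1$ for $0 \le j \le d$. Because $R$ is a domain, a nonzero form in $[I]_{d+1} \ne 0$ can be multiplied by powers of $x$ to produce nonzero forms in every higher degree, so $[I]_j \ne 0$ and hence $h(j) \le j$ for all $j \ge d+1$; in particular $h(d+1) \le d+1 = h(d)$. For $j \ge d+1$ the key computation then gives $h(j+1) \le h(j)^{\langle j \rangle} = h(j)$ by Theorem~\ref{thm:mac}(iii). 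Thus $h(j) \ge h(j+1)$ for every $j \ge d$, as required.

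For the ``if'' direction I would start from an $h$ of the stated shape and verify the numerical hypotheses of Theorem~\ref{thm:mac}(iii). Clearly $h(0) = 1$, and the bound $h(j+1) \le h(j)^{\langle j \rangle}$ is checked for $j \ge 1$ in three ranges: for $1 \le j < d$ both sides equal $j+2$ since $(j+1)^{\langle j \rangle} = j+2$; at $j = d$ one has $h(d)^{\langle d \rangle} = (d+1)^{\langle d \rangle} = d+2 \ge h(d+1)$; and for $j > d$ one has $h(j) \le j$, so the key computation gives $h(j)^{\langle j \rangle} = h(j) \ge h(j+1)$. Hence $h$ is a genuine Hilbert function, and since $h(1) \le 2$ always (it equals $2$ when $d \ge 1$ and is at most $h(0) = 1$ when $d = 0$), Theorem~\ref{thm:mac}(ii) realizes $h$ by an initial lexsegment quotient in at most two variables, which we view inside $R = K[x,y]$; concretely, one may take $[I]_j$ to be the lexsegment spanned by the top $j+1 - h(j)$ monomials in each degree, the nesting condition for this to be an ideal being exactly $h(j+1) \le h(j)$. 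Properness is automatic, since $I = 0$ would force the strictly increasing $h(j) = j+1$, which is excluded by the monotone tail.

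The step I expect to demand the most care is the boundary at $j = d$. There the maximal value $h(d) = d+1$ lies outside the range $c \le d$, so the identity $c^{\langle d \rangle} = c$ does not apply and Macaulay's inequality yields only the vacuous $h(d+1) \le d+2$; extracting the genuine drop $h(d+1) \le h(d)$ in the forward direction instead relies on the structural fact $\indeg{I} = d+1$ together with $R$ being a domain. I would also watch the low-dimensional edge cases $d = 0$ and $h(1) \le 1$, where the realizing quotient lives in fewer than two variables yet must still be exhibited as a quotient of $K[x,y]$.
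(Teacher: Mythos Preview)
Your argument is correct and follows the same route the paper indicates: the proposition is stated there without proof, as an immediate consequence of Macaulay's Theorem (Theorem~\ref{thm:mac}). Your careful unpacking of the identity $c^{\langle d\rangle}=c$ for $1\le c\le d$ and the handling of the boundary at $j=d$ supply precisely the details the paper omits.
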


Moreover, we classify the Hilbert functions that force a monomial ideal to be lexsegment.

\begin{lemma}\label{lem:h-force-lexsegment}
    Suppose $h: \NN_0 \to \NN_0$ is the Hilbert function of some quotient of $R = K[x,y]$.
    Then every monomial quotient $R/I$ with $h_{R/I} = h$ is a lexsegment quotient if and only if for every nonnegative
    integer $d$ such that $h(d) > h(d+1)$ then $h(d+1) = h(d+2)$.
\end{lemma}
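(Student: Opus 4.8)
The plan is to reduce the statement to an elementary combinatorial fact about the ``staircases'' of monomial ideals in two variables. To a monomial ideal $I \subseteq R$ I attach, in each degree $j$, the set $B_j \subseteq \{0, 1, \ldots, j\}$ of $y$-exponents occurring in $[I]_j$, so that $x^{j-b}y^b \in I$ exactly when $b \in B_j$. Writing $B+1 := \{b+1 \st b \in B\}$, the fact that $I$ is an ideal translates into the single condition $B_{j-1} \cup (B_{j-1}+1) \subseteq B_j$ for every $j$ (multiplication by $x$ fixes the $y$-exponent, multiplication by $y$ raises it by one); conversely any sequence of sets satisfying these containments comes from a unique monomial ideal, and one verifies that no degree-$j$ monomial outside $B_j$ is dragged into $I$ by lower generators. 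Under this dictionary $h_{R/I}(j) = (j+1) - |B_j|$, and since the lexicographic order on degree-$j$ monomials is decreasing in the $y$-exponent, $R/I$ is a lexsegment quotient if and only if every $B_j$ is an interval of consecutive integers.

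The engine of the proof is the identity $|B \cup (B+1)| = |B| + \rho(B)$, where $\rho(B)$ is the number of maximal runs of consecutive integers in $B$ (each run contributes exactly its shifted top). Setting $m_j := |B_j| = (j+1) - h(j)$, the containment above gives $m_{j+1} \ge m_j + \rho(B_j)$, that is $\rho(B_j) \le m_{j+1} - m_j = 1 + \bigl(h(j) - h(j+1)\bigr)$; recall from Proposition~\ref{pro:classify-hf} that these differences are nonnegative once $h$ has begun to decrease. For the direction ``$\Leftarrow$'' I prove the contrapositive: if some monomial quotient with $h_{R/I} = h$ is not lexsegment, choose the least $j$ with $\rho(B_j) \ge 2$. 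The displayed inequality immediately forces $h(j) > h(j+1)$. By minimality $B_{j-1}$ is empty or a single run, so $B_{j-1} \cup (B_{j-1}+1)$ has at most one run; since $B_j$ has at least two, a short case-check ($B_{j-1}$ empty versus a single run) yields $m_j - m_{j-1} \ge 2$, hence $h(j-1) > h(j)$. Thus $h(j-1) > h(j) > h(j+1)$, violating the hypothesis.

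For ``$\Rightarrow$'' I produce, from two consecutive strict drops $h(d) > h(d+1) > h(d+2)$, a non-lexsegment ideal with Hilbert function $h$. Take the initial lexsegment sets $B_j = \{0, \ldots, m_j-1\}$ in every degree except $d+1$, where I insert a one-element gap by setting $B_{d+1} = \{0, \ldots, m_d\} \cup \{m_d+2, \ldots, m_{d+1}\}$. The first drop $h(d) > h(d+1)$ is exactly what makes the top block nonempty, so that $\rho(B_{d+1}) = 2$, while $h(d+1) > h(d+2) \ge 0$ forces $h(d+1) \ge 1$, which places the largest element $m_{d+1} = d+2-h(d+1)$ inside $\{0, \ldots, d+1\}$. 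One checks $B_d \cup (B_d+1) \subseteq B_{d+1}$, and that the gap closes in the next step: $B_{d+1} \cup (B_{d+1}+1) = \{0, \ldots, m_{d+1}+1\}$ is again an interval, so $B_{d+2} = \{0, \ldots, m_{d+2}-1\}$ is a legitimate successor precisely because the second drop gives $m_{d+2} \ge m_{d+1}+2$. The resulting ideal has Hilbert function $h$ but fails to be lexsegment in degree $d+1$.

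The main obstacle is the bookkeeping in the construction for ``$\Rightarrow$'': one must simultaneously respect $B_d \cup (B_d+1) \subseteq B_{d+1} \subseteq \{0, \ldots, d+1\}$ and $B_{d+1} \cup (B_{d+1}+1) \subseteq B_{d+2}$ while keeping each $|B_j|$ equal to $m_j$. The two hypotheses $h(d) > h(d+1)$ and $h(d+1) > h(d+2)$ are consumed exactly by these two containments, which is the structural reason that two consecutive drops, and not merely one, are required.
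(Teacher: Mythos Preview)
Your proof is correct and follows essentially the same strategy as the paper's: both argue the contrapositive of each direction, build a non-lexsegment ideal by displacing one monomial in degree $d+1$ of the initial lexsegment ideal, and for the converse count monomials to force two consecutive strict drops $h(j-1) > h(j) > h(j+1)$. Your encoding via the $y$-exponent sets $B_j$ and the run-counting identity $|B \cup (B+1)| = |B| + \rho(B)$ is a tidy repackaging of the paper's direct monomial bookkeeping, and although the gap you insert sits at position $m_d+1$ rather than the paper's $m_{d+1}-1$, either choice works under the same hypotheses.
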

\begin{proof}
    We prove the negation of the desired statement in two parts.  Moreover, all comparisons of monomials in
    $R = K[x,y]$ are in the lexicographic order with $x > y$, and all ordered sets are presented in ascending order.

    Suppose that there exists a nonnegative integer $d$ such that $h(d) > h(d+1)$ and $h(d+1) \neq h(d+2)$.  By
    Proposition~\ref{pro:classify-hf}, once a Hilbert function is weakly decreasing, then it must remain so;
    hence $h(d+1) > h(d+2)$.  Let $I$ be the initial lexsegment ideal with Hilbert function $h$, as guaranteed
    by Macaulay's Theorem (see Theorem~\ref{thm:mac}).  By construction, $[I]_d$ is spanned by $a = d+1 - h(d)$ monomials
    of degree $d$, in particular, by the set $A = \{x^{d-a+1}y^{a-1}, \ldots, x^d\}$.  Similarly, $[I]_{d+1}$ and
    $[I]_{d+2}$ are spanned by $b = d+2 - h(d+1)$ monomials of degree $d+1$ and $c = d+3 - h(d+2)$ monomials of degree $d+2$,
    respectively; let $B = \{x^{d-b+2}y^{b-1}, \ldots, x^{d+1}\}$ and $C = \{x^{d-c+3}y^{c-1}, \ldots, x^{d+2}\}$ be those monomials.

    Let $B' = \{x^{d-b+1}y^{b}, x^{d-b+3}y^{b-2}, \ldots, x^{d+1}\}$.  Since $h(d) > h(d+1)$, we have that $b-a \geq 2$
    and so every product of a member of $A$ with either $x$ or $y$ is in $B'$; in particular, $x^{d-a+1}y^{a-1} \cdot y$
    is in $B'$ since $a \leq b-2$.  Further, since $h(d+1) > h(d+2)$, we have that $c-b \geq 2$.  Hence every product of
    a member of $B'$ with either $x$ or $y$ is in $C$; in particular, $x^{d-b+1}y^{b} \cdot y$ is in $C$ since $b+1 \leq c-1$.
    Let $J$ be given by $[J]_i = [I]_i$ for $i \neq d+1$ and suppose $[J]_{d+1}$ spanned by $B'$. Then $J$ is an ideal
    with Hilbert function $h_{R/J} = h$ but is not lexsegment in degree $d+1$.

    Now suppose that there exists a monomial quotient $R/I$ with $h_{R/I} = h$ that is not lexsegment.  That is, there
    is a degree, say, $d+1$, such that $[I]_{d+1}$ is spanned by $d+2 - h(d+1)$ monomials such that there are at most
    $d - h(d+1)$ pairs of consecutive monomials.  For every monomial $m$ in $[I]_d$, $\{ym, xm\}$ forms a consecutive
    pair in $[I]_{d+1}$, so there can be at most $d-h(d+1)$ monomials spanning $[I]_d$.  Since exactly $d+1 - h(d)$
    monomials span $[I]_d$, then $h(d+1) + 1 \leq h(d)$.  Moreover, for every monomial $m'$ in $[I]_{d+1}$, the monomials
    $xm'$ and $ym'$ are in $[I]_{d+2}$.  However, consecutive monomials overlap in exactly one multiple.  Hence
    there are at least $2(d+2 - h(d+1)) - (d - h(d+1)) = d + 4 - h(d+1)$ monomials in $[I]_{d+2}$.  Since exactly
    $d+3 - h(d+2)$ monomials span $[I]_{d+2}$, then $h(d+2) + 1 \leq h(d+1)$.  Thus we have that
    $h(d) > h(d+1) > h(d+2)$.
\end{proof}

\subsection{The width function of a monomial ideal}\label{sub:width-func}~

Throughout this section, all comparisons of monomials in $R = K[x,y]$ are in the lexicographic order with $x > y$.

Let $I$ be a (not necessarily artinian or lexsegment) monomial ideal of $R$.  The \emph{width function} of $R/I$ is
the function $w_{R/I}: \NN_0 \to \NN_0$ defined as follows.
If $0 \leq d < \indeg{I}$, then $w_{R/I}(d) = 0$.  Suppose $d \geq \indeg{I}$.  Let $b$ be the smallest integer so
that $x^b y^{d-b} \in I$, and let $c$ be the largest integer so that $x^c y^{d-c} \in I$.  Then $w_{R/I}(d) = c-b+1$
is the ``width'' of the monomials in $[I]_d$.  If $R/I$ is artinian and $r = \reg{R/I}$, then we call the
finite sequence $(w(0), \ldots, w(r))$, where $w = w_{R/I}$, the \emph{$w$-vector} of $R/I$.

\begin{example}
    Let $I = (x^6, x^3y, xy^5, y^5)$.  Then the $h$-vector of $R/I$ is $(1, 2, 3, 4, 4, 2)$ and the
    $w$-vector of $R/I$ is $(0,0,0,0,1,5)$.  Notice that in degree $4$, the only monomial in $[I]_4$
    is $x^3y$, hence $w_{R/I}(4) = 1$.  However, in degree $5$, the monomials in $[I]_5$ are
    $y^5, xy^4, x^3y^2,$ and $x^4y$, so $w_{R/I}(5) = 5$.
\end{example}

We now classify the possible width functions of monomial ideals.

\begin{proposition}\label{pro:classify-width}
    Let $w: \NN_0 \to \NN_0$ be a function, and let $R = K[x,y]$.  The following statements are equivalent:
    \begin{enumerate}
    \item $w = w_{R/I}$, where $I$ is a monomial ideal,
    \item $w = w_{R/I}$, where $I$ is an initial lexsegment ideal,
    \item $w(d) = d+1 - h_{R/I}(d)$, where $I$ is an initial lexsegment ideal, and
    \item either $w(d) = d+1$ for all $d \geq 0$ or there exists an integer $m > 0$ so
        that $w(d) = 0$ for $d < m$ and $1 \leq w(d) < w(d+1) \leq d+2$, for $d \geq m$.
    \end{enumerate}
\end{proposition}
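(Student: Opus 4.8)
The plan is to prove the chain (ii) $\Rightarrow$ (i) $\Rightarrow$ (iv) $\Rightarrow$ (iii), together with the direct equivalence (ii) $\Leftrightarrow$ (iii), so that all four statements close up into a single equivalence class. First I would dispatch the two cheap steps. The implication (ii) $\Rightarrow$ (i) is immediate, since an initial lexsegment ideal is in particular a monomial ideal. For (ii) $\Leftrightarrow$ (iii), fix an initial lexsegment ideal $I$ and compute $w_{R/I}$ degree by degree: in any degree $d$ with $[I]_d \neq 0$, the defining property forces $[I]_d$ to consist of the top $a := d+1-h_{R/I}(d)$ monomials in the lexicographic order, namely $x^d, x^{d-1}y, \ldots, x^{d-a+1}y^{a-1}$. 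Their $x$-exponents run from $d-a+1$ to $d$, so $w_{R/I}(d) = a = d+1 - h_{R/I}(d)$; and when $[I]_d = 0$ both sides vanish. Thus the single identity $w_{R/I}(d) = d+1-h_{R/I}(d)$ holds for every initial lexsegment ideal, which is exactly the assertion that (ii) and (iii) describe the same function for the same $I$.

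The heart of the argument is (i) $\Rightarrow$ (iv), and this is where I expect the main obstacle to lie. Let $I$ be a (nonzero) monomial ideal; I would set aside the degenerate case $I = R$, which yields $w(d) = d+1$ for all $d$ (the first alternative of (iv)), and otherwise put $m = \indeg{I} > 0$. For $d < m$ we have $w(d) = 0$ by definition, while for $d \geq m$ the component $[I]_d$ is nonempty, since it contains a multiple of a degree-$m$ generator, so $w(d) \geq 1$; also $w(d+1) \leq d+2$ trivially, as there are only $d+2$ monomials of degree $d+1$. The crucial point is the strict monotonicity. If $x^by^{d-b}$ and $x^cy^{d-c}$ are the monomials of smallest and largest $x$-exponent in $[I]_d$, then $y \cdot x^by^{d-b} = x^by^{d-b+1}$ and $x \cdot x^cy^{d-c} = x^{c+1}y^{d-c}$ both lie in $[I]_{d+1}$, with $x$-exponents $b$ and $c+1$ respectively; hence the $x$-exponents occurring in $[I]_{d+1}$ span at least from $b$ to $c+1$, giving $w(d+1) \geq (c+1)-b+1 = w(d)+1$. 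This establishes $1 \leq w(d) < w(d+1) \leq d+2$ for all $d \geq m$, which is the second alternative of (iv).

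Finally, for (iv) $\Rightarrow$ (iii) I would run the identity of the first paragraph in reverse. Given $w$ as in the second alternative of (iv) (the first alternative is realized by $I = R$), define $h(d) := d+1 - w(d)$. The conditions on $w$ translate directly into the hypotheses of Proposition~\ref{pro:classify-hf} with threshold $m-1$: one checks $h(j) = j+1$ for $j < m$, that $h(d) - h(d+1) = w(d+1) - w(d) - 1 \geq 0$ for $d \geq m$ (the strict increase of the integer-valued $w$ gives $w(d+1) \geq w(d)+1$), and that $h(d+1) = d+2 - w(d+1) \geq 0$ (using $w(d+1) \leq d+2$). Hence $h$ is a genuine Hilbert function, so by Macaulay's Theorem (Theorem~\ref{thm:mac}) there is an initial lexsegment ideal $I$ with $h_{R/I} = h$, and the identity of the first paragraph then gives $w_{R/I}(d) = d+1 - h(d) = w(d)$, which is (iii). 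The only delicate points, which I would treat as explicit checks, are the boundary behaviour at $d = m$ — ensuring $h$ is weakly decreasing across the threshold, which reduces to $w(m) \geq 1$ — and the bookkeeping for the degenerate ideal $I = R$; note that the zero ideal is excluded throughout, since its width function is not defined ($\indeg{I}$ requires a minimal generator).
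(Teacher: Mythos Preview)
Your proof is correct and follows the same approach as the paper: both cycle through (i) $\Rightarrow$ (iv) $\Rightarrow$ (iii), with the key monotonicity in (i) $\Rightarrow$ (iv) obtained by multiplying the extremal monomials of $[I]_d$ by $y$ and $x$, and both use Proposition~\ref{pro:classify-hf} together with Macaulay's Theorem for (iv) $\Rightarrow$ (iii). The only cosmetic difference is that you package the paper's (iii) $\Rightarrow$ (ii) computation as a direct equivalence (ii) $\Leftrightarrow$ (iii) via the identity $w_{R/I}(d) = d+1 - h_{R/I}(d)$ for initial lexsegment ideals; the content is identical.
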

\begin{proof}
    Clearly (ii) implies (i).  We proceed by showing (i) $\Rightarrow$ (iv) $\Rightarrow$ (iii)
    $\Rightarrow$ (ii).

    \emph{(i) $\Rightarrow$ (iv)}: Let $I$ be a monomial ideal with width function $w = w_{R/I}$.
    If $I = R$, then $w(d) = d+1$ for all $d$.  Suppose $I \neq R$, then $\indeg{I} \geq 1$.
    By construction, $w(d) = 0$ for $d < \indeg{I}$.  Let $d \geq \indeg{I}$, and set
    $b = \min\{ i \st x^i y^{d-i} \in I\}$ and $c = \max\{ i \st x^i y^{d-i} \in I\}$, i.e., $w(d) = c-b+1$.
    Clearly $0 \leq b \leq c \leq d$, so $1 \leq w(d) \leq d+1$.  Moreover, since $x^b y^{d-b}$
    and $x^c y^{d-c}$ are both in $I$, then $x^b y^{d+1-b}$ and $x^{c+1} y^{d-c}$ are both in $I$.
    Thus $b \geq \min\{ i \st x^i y^{d+1-i}\}$ and $c+1 \leq \max\{ i \st x^i y^{d+1-i} \in I\}$,
    and so $w(d+1) \geq c+1-b+1 > w(d)$.

    \emph{(iv) $\Rightarrow$ (iii)}:  If $w(d) = d+1$ for all $d \geq 0$, then $w(d) = d+1 - h_{R/I}(d)$,
    where $I = R$, which is clearly an initial lexsegment ideal.  Suppose now $w: \NN_0 \to \NN_0$
    is a function such that there exists an integer $m > 0$ so that $w(d) = 0$ for $d < m$ and
    $1 \leq w(d) < w(d+1) \leq d+2$, for $d \geq m$.  Define $h: \NN_0 \to \NN_0$ by
    $h(d) = d+1 - w(d)$ for $d \geq 0$.  Thus $h(0) = 1$, $h(1) \leq 2$, and $h(d) = d+1$ for
    $0 \leq d < m$.  Moreover, since $w(d) < w(d+1) \leq d+2$ for $d \geq m$, then
    $h(d) = d+1 - w(d) \geq d+2 - w(d+1) = h(d+1) \geq 0$.  Hence by Proposition~\ref{pro:classify-hf},
    $h$ is a Hilbert function of some proper homogeneous quotient of $R$.  Thus by Macaulay's
    Theorem (see Theorem~\ref{thm:mac}), there exists an initial lexsegment ideal $I$ such that
    $h = h_{R/I}$.

    \emph{(iii) $\Rightarrow$ (ii)}: Let $w(d) = d+1 - h_{R/I}(d)$, where $I$ is an initial lexsegment ideal.
    As $I$ is lexsegment in degree $d$, for all $d$, then there are $d+1 - h_{R/I}(d) = w(d)$ monomials
    of degree $d$ in $I$.  Moreover, these $w(d)$ monomials are consecutive in the lexicographic order
    and so $w_{R/I}(d) = w(d)$.

    Thus the four statements (i)--(iv) are indeed equivalent.
\end{proof}

Further, we classify the width functions that force a monomial ideal to be lexsegment.

\begin{lemma}\label{lem:w-force-lexsegment}
    Suppose $w: \NN_0 \to \NN_0$ is the width function of some monomial quotient of $R = K[x,y]$.
    Then every monomial quotient $R/I$ with $w_{R/I} = w$ is lexsegment if and only if for every
    nonnegative integer $d$ we have $0 \leq w(d+1) - w(d) \leq 2$.
\end{lemma}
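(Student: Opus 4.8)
The plan is to mirror the two-part structure of the proof of Lemma~\ref{lem:h-force-lexsegment}: I will prove the negation of the claimed equivalence in two parts, one for each implication. Throughout I compare monomials in the lexicographic order with $x > y$, so that in degree $d$ the monomials $x^d > x^{d-1}y > \cdots > y^d$ are indexed by the exponent of $x$, and I record $[I]_d$ by the set of indices $i$ with $x^i y^{d-i} \in I$. Being lexsegment in degree $d$ then means exactly that this index set is an interval (a block of consecutive integers), equivalently that its cardinality equals the width $w(d)$. I first note that the lower bound $0 \leq w(d+1) - w(d)$ is automatic: by Proposition~\ref{pro:classify-width} every width function is nondecreasing. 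Hence the real content of the lemma is the upper bound $w(d+1) - w(d) \leq 2$.

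For the first part (the implication that forces the bound) I suppose there is a degree $d$ with $w(d+1) - w(d) \geq 3$ and construct a non-lexsegment ideal with width function $w$. Starting from the initial lexsegment ideal $I$ realizing $w$, whose degree-$d$ piece is the block of the $w(d)$ lex-largest monomials, I will alter only degree $d+1$. The multiples $x\cdot[I]_d \cup y\cdot[I]_d$ form the top $w(d)+1$ monomials of degree $d+1$; I set $[J]_{d+1}$ to be this block together with the single lex-smallest monomial of $[I]_{d+1}$, leaving $J$ equal to $I$ in every other degree. Retaining that smallest monomial preserves the width $w(d+1)$, and because the jump is at least $3$ there is at least one index strictly between the isolated bottom monomial and the top block, so $[J]_{d+1}$ has a genuine gap and is not lexsegment. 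It remains to check that $J$ is an ideal, i.e.\ that the $x$- and $y$-multiples of $[J]_{d+1}$ lie in $[I]_{d+2}$; this holds because $w$ is strictly increasing in this range, so degree $d+2$ is wide enough to absorb both the bottom monomial and the top block.

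For the second part I argue the contrapositive: I assume some monomial quotient $R/I$ with $w_{R/I} = w$ is not lexsegment and let $e$ be the smallest degree in which $[I]_e$ is not an interval. By minimality $[I]_{e-1}$ is a consecutive block, say from index $b'$ to $c'$ (possibly empty), and closing under multiplication by $x$ and $y$ forces $[I]_e$ to contain the entire interval from $b'$ to $c'+1$. Since this is already a full interval inside $[I]_e$, the failure to be an interval forces a monomial of $[I]_e$ at some index $j$ separated from that block by a missing index, so that $j \leq b'-2$ or $j \geq c'+3$. In the first case the smallest index of $[I]_e$ is at least $2$ below $b'$, and in the second its largest index is at least $2$ above $c'+1$; either way, comparing with the width $c'-b'+1$ of $[I]_{e-1}$ gives $w(e) \geq w(e-1)+3$ (when $[I]_{e-1}$ is empty the same count gives $w(e) \geq 3$). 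This produces the required jump and completes the equivalence.

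The main obstacle is the construction in the first part: I must simultaneously arrange that $[J]_{d+1}$ is genuinely non-consecutive, that it has exactly the prescribed width $w(d+1)$, and that $J$ remains a monomial ideal with every other graded piece unchanged. The delicate point is that all three demands can be met at once \emph{precisely} when the jump is at least $3$ — a smaller jump leaves no room to open a gap without either altering the width or breaking the ideal property — which is exactly why the threshold in the statement is $2$ rather than any other constant.
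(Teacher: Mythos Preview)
Your proof is correct and follows the same two-part negation strategy as the paper: construct a non-lexsegment ideal with the given width function when some jump exceeds $2$, and conversely show that the first non-lexsegment degree forces a jump of at least $3$. The only substantive difference is in the first construction: the paper deletes a single interior monomial (the next-to-smallest element of $[I]_{d+1}$, which is not a multiple of anything in $[I]_d$ precisely because $w(d+1)-w(d)\geq 3$), whereas you discard everything except the forced top block and the bottom endpoint; both choices keep the extreme monomials (hence the width) while opening a gap, and both rely on $w(d+2)>w(d+1)$ to check that $J$ remains an ideal. One small edge case to tidy: when $w(d)=0$ the set $x\cdot[I]_d\cup y\cdot[I]_d$ is empty rather than ``the top $w(d)+1$ monomials'', but if you read your construction literally as ``the top $w(d)+1$ monomials together with the lex-smallest monomial of $[I]_{d+1}$'' it still produces a valid non-lexsegment ideal of the correct width in that case.
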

\begin{proof}
    We prove the negation of the desired statement in two parts.  Moreover, all comparisons of monomials in
    $R = K[x,y]$ are in the lexicographic order with $x > y$, and all ordered sets are presented in ascending order.

    Suppose that there exists a nonnegative integer $d$ so that $w(d+1) - w(d) > 2$.  Let $I$ be the initial
    lexsegment ideal with width function $w_{R/I} = w$, as guaranteed by Proposition~\ref{pro:classify-width}.
    Hence $[I]_{d+1}$ is spanned by the $w(d+1)$ monomials $B = \{x^{d-w(d+1)+2}y^{w(d+1)-1}, \ldots, x^{d+1}\}$.
    Notice that the next to last monomial of $B$, namely $x^{d-w(d+1)+3}y^{w(d+1)-2}$, is \emph{not} a multiple
    of a monomial in $[I]_d$, since $x^{d-w(d)+1}y^{w(d)-1}$ is the smallest monomial in $[I]_d$ and $w(d) \leq w(d+1)-3$.

    Let $B'$ be $B \setminus \{x^{d-w(d)+1}y^{w(d)-1}\}$.  Hence if $J$ given by $[J]_i = [I]_i$ for $i \neq d+1$ and
    $[J]_{d+1}$ spanned by $B'$, then $J$ is an ideal with width function $w_{R/J} = w$, and $J$ is not lexsegment
    in degree $d+1$.

    Now suppose that there exists a monomial quotient $R/I$ with $w_{R/I} = w$ that is lexsegment in every
    degree $i \leq d$ but not lexsegment in degree $d+1$.  Thus there are $w(i)$ consecutive monomials in $I$ of degree $i$
    for $i \leq d$, and there are less than $w(d+1)$ monomials in $I$ of degree $d+1$.  By Proposition~\ref{pro:classify-width}
    we have $w(d) < w(d+1)$.  Since $I$ is not lexsegment in degree $d+1$, then there must be at least one monomial that is
    not consecutive to one of the $w(d)+1$ consecutive multiples of the $w(d)$ consecutive monomials in degree $d$.  Thus the
    width in degree $d+1$ must be at least two larger than $w(d)+1$ (one for the absent monomial and one for the
    guaranteed non-consecutive monomial).  That is, $w(d+1) \geq w(d)+3$.
\end{proof}

For a monomial ideal $I$ of $R$, the number of degree $d$ monomials not in $I$ between the
smallest and largest degree $d$ monomials in $I$ can be derived easily from the width function and the Hilbert function.
This number can be thought of as the ``lexsegment defect'' of $I$ in degree $d$.

\begin{lemma} \label{lem:lex-seg-dist}
    Let $I$ be a monomial ideal in $R$, where the monomials of $R$ are ordered lexicographically.
    The number of degree $d$ monomials not in $I$ between the smallest and largest degree $d$ monomials in $I$ is
    $w_{R/I}(d) + h_{R/I}(d) - (d+1)$, which is zero if and only if $I$ is lexsegment in degree $d$.
\end{lemma}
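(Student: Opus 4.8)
The plan is to argue by directly counting monomials in a single fixed degree $d$, so the entire statement reduces to elementary bookkeeping against the definitions of the width and Hilbert functions. First I would list the $d+1$ monomials of $[R]_d$ as $x^i y^{d-i}$ for $0 \le i \le d$ and observe that, under the lexicographic order with $x > y$, this indexing is strictly increasing in $i$; thus comparing monomials of $[R]_d$ is the same as comparing their $x$-exponents. Assuming $[I]_d \ne 0$, let $b$ and $c$ be the integers from the definition of the width function, so that $x^b y^{d-b}$ and $x^c y^{d-c}$ are the smallest and largest degree $d$ monomials of $I$ and $w_{R/I}(d) = c - b + 1$ counts \emph{every} monomial of $[R]_d$ whose $x$-exponent lies in the interval $[b,c]$, regardless of whether it belongs to $I$.

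Next I would count the degree $d$ monomials of $I$ in two ways. Since $[R/I]_d$ has dimension $h_{R/I}(d)$ while $[R]_d$ has dimension $d+1$, exactly $(d+1) - h_{R/I}(d)$ monomials of $[R]_d$ lie in $I$. By the minimality of $b$ and the maximality of $c$, each such monomial has $x$-exponent in $[b,c]$, so all of these $(d+1) - h_{R/I}(d)$ monomials sit among the $w_{R/I}(d)$ monomials of the interval. Subtracting the monomials that \emph{are} in $I$ from the total number in the interval then leaves
\[
    w_{R/I}(d) - \bigl((d+1) - h_{R/I}(d)\bigr) = w_{R/I}(d) + h_{R/I}(d) - (d+1)
\]
monomials of degree $d$ in the interval that are not in $I$, which is precisely the count in the statement.

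For the final claim, this quantity vanishes exactly when every monomial with $x$-exponent in $[b,c]$ lies in $I$, i.e.\ when the set of $x$-exponents of degree $d$ monomials of $I$ is the full interval $[b,c]$. Because lexicographic order agrees with the order of $x$-exponents, this is exactly the condition that $[I]_d$ is a lexsegment, so the count is zero if and only if $I$ is lexsegment in degree $d$. Finally I would dispose of the degenerate case $d < \indeg{I}$: there $w_{R/I}(d) = 0$ and $[R/I]_d \cong [R]_d$ gives $h_{R/I}(d) = d+1$, so the formula returns $0$, consistent with $I$ being vacuously lexsegment in such a degree. I do not anticipate a genuine obstacle here; the only point demanding care is recording that the definition of $b$ and $c$ forces every degree $d$ monomial of $I$ to lie inside $[b,c]$, since that is exactly what ties the width, the Hilbert function, and the lexsegment condition together.
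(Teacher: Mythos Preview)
Your proof is correct and follows essentially the same approach as the paper's: both subtract the number of degree $d$ monomials in $I$, namely $(d+1)-h_{R/I}(d)$, from the total number $w_{R/I}(d)$ of monomials in the lex interval between the extreme monomials of $[I]_d$. Your write-up is somewhat more explicit, in particular in justifying that all degree $d$ monomials of $I$ lie in the interval $[b,c]$ and in handling the degenerate case $d<\indeg I$, but the underlying counting argument is the same.
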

\begin{proof}
    For $d \geq 0$, there are $d+1-h_{R/I}(d)$ degree $d$ monomials \emph{not} in $I$, thus there
    are $w_{R/I}(d) - (d+1 - h_{R/I}(d))$ degree $d$ monomials \emph{not} in $I$ between the
    smallest and largest degree $d$ monomials in $I$.  Furthermore, $I$ is lexsegment in degree $d$ if and only
    if there are no missing monomials between the smallest and largest degree $d$ monomials in $I$.
\end{proof}

\section{Maximal rank multiplication maps}\label{sec:max-rank}~

Throughout this section, all comparisons of monomials in $R = K[x,y]$ are in the lexicographic order with $x > y$,
and all ordered sets are presented in ascending order.

Recall that a monomial algebra has the weak (strong) Lefschetz property exactly when the sum of the variables
is a weak (strong) Lefschetz element.

\begin{proposition}{\cite[Proposition~2.2]{MMN}} \label{pro:mono}
    Let $I$ be a monomial artinian ideal in the ring $S = K[x_1, \ldots, x_n]$.  Then $S/I$ has the weak (strong)
    Lefschetz property if and only if $x_1 + \cdots + x_n$ is a weak (strong) Lefschetz element of $S/I$.
\end{proposition}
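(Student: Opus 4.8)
The plan is to exploit the torus symmetry that a monomial ideal carries, and I treat the strong case throughout; the weak case is the special instance in which one tracks only the maps with $t=1$. The reverse implication is immediate: if $x_1 + \cdots + x_n$ is a strong (weak) Lefschetz element, then $S/I$ has the strong (weak) Lefschetz property by the very definition. All the content lies in the forward direction.

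First I would record that the set of strong Lefschetz elements is a Zariski-open subset of $[S/I]_1 \cong K^n$. For fixed $d$ and $t$, the condition that $\times \ell^t \colon [S/I]_d \to [S/I]_{d+t}$ has maximal rank is the nonvanishing of some maximal minor of the matrix representing this map, whose entries are polynomial in the coefficients of $\ell$; hence it is an open condition. Since $S/I$ is artinian with $r = \reg{S/I}$, only the finitely many pairs $(d,t)$ with $d+t \leq r$ impose a nontrivial constraint, so the strong Lefschetz locus $U$ is a finite intersection of open sets and is therefore itself open. By the hypothesis that $S/I$ has the strong Lefschetz property, $U$ is nonempty.

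Next I would use that $I$, being a monomial ideal, is invariant under the algebraic torus $T = (K^{\ast})^n$ acting by $g \cdot x_i = g_i x_i$ for $g = (g_1, \ldots, g_n)$. Consequently each $g \in T$ induces a graded $K$-algebra automorphism $\varphi_g$ of $S/I$, and such automorphisms carry strong Lefschetz elements to strong Lefschetz elements: they respect the grading and intertwine $\times \ell^t$ with $\times (g \cdot \ell)^t$ via $\varphi_g$, so the two maps have equal rank in every degree. Explicitly $g \cdot \left(\sum_i a_i x_i\right) = \sum_i a_i g_i x_i$, so $T$ acts on the locus $U$.

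Finally, since $K$ is infinite and affine space $\mathbb{A}^n_K$ is irreducible, the two nonempty Zariski-open subsets $U$ and $T \subset K^n$ meet in a $K$-point; that is, there exists a strong Lefschetz element $\ell = \sum_i a_i x_i$ with every $a_i \neq 0$. Applying the automorphism given by $g = (a_1^{-1}, \ldots, a_n^{-1}) \in T$ sends $\ell$ to $\sum_i a_i a_i^{-1} x_i = x_1 + \cdots + x_n$, which is therefore again a strong Lefschetz element, as desired. The step requiring the most care is verifying that the torus genuinely acts by graded automorphisms of $S/I$ preserving the maximal-rank condition, together with the density argument intersecting $U$ with $T$; the latter is precisely where the standing hypothesis that $K$ is infinite is indispensable.
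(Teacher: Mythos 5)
Your argument is correct and is essentially the standard one: the paper itself does not prove Proposition~\ref{pro:mono} but quotes it from \cite[Proposition~2.2]{MMN}, whose proof is precisely this combination of (a) the Lefschetz locus being a nonempty Zariski-open subset of the space of linear forms and (b) the torus $(K^*)^n$ acting on $S/I$ by graded automorphisms because $I$ is monomial, so that a Lefschetz element with all coordinates nonzero can be rescaled to $x_1+\cdots+x_n$. The only cosmetic point is that $[S/I]_1 \cong K^n$ presumes $I$ has no linear generators; if some $x_i \in I$ one works with linear forms of $S$ itself (or discards those variables), and the openness, torus-invariance, and density-over-an-infinite-field steps go through unchanged.
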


In particular, Theorem~\ref{thm:wlp} immediately implies that we need only look at the maps between
equi-dimensional components.

\begin{lemma} \label{lem:slp-same-hf}
    Let $I$ be a monomial artinian ideal in $R$.  Then $R/I$ has the strong Lefschetz property
    if and only if the map $\times (x+y)^t: [R/I]_d \rightarrow [R/I]_{d+t}$ is a bijection for all integers
    $d \geq 0$ and $t \geq 1$ where $h_{R/I}(d) = h_{R/I}(d+t) = d+1$.
\end{lemma}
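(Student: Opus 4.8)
The plan is to invoke Proposition~\ref{pro:mono} to replace the strong Lefschetz property by the single assertion that every map $\times(x+y)^t\colon [R/I]_d\to[R/I]_{d+t}$ has maximal rank, and then to peel off from this family exactly the bijections claimed. One direction is immediate: if $R/I$ has the strong Lefschetz property then each such map has maximal rank, and a maximal-rank map between spaces of equal dimension is a bijection, so specializing to the degrees with $h_{R/I}(d)=h_{R/I}(d+t)=d+1$ yields the listed bijections. All the work is in the converse, and I would begin by recording the shape of the Hilbert function. By Proposition~\ref{pro:classify-hf} there is an integer $d_0$ with $h_{R/I}(d)=d+1$ for $0\le d\le d_0$ and $h_{R/I}(d)\ge h_{R/I}(d+1)$ for $d\ge d_0$; moreover $h_{R/I}(d)=d+1$ holds \emph{exactly} when $d\le d_0$. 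By Theorem~\ref{thm:wlp} each single step $\times(x+y)\colon[R/I]_i\to[R/I]_{i+1}$ has maximal rank, hence is injective for $i<d_0$ and surjective for $i\ge d_0$.

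Next I would dispose of the ``monotone'' composites. A composite $\times(x+y)^t$ whose target degree is at most $d_0$ is a product of injections, and one whose source degree is at least $d_0$ is a product of surjections; in either case it has maximal rank for free. This isolates the only interesting maps, the \emph{peak-crossing} ones with $d<d_0<d+t$, for which the source $[R/I]_d=[R]_d$ is a full polynomial component. Writing $e=d+t$, these split into three subcases by the sign of $h_{R/I}(e)-(d+1)$: the equidimensional case $h_{R/I}(e)=d+1$ is exactly the hypothesis, while the cases $h_{R/I}(e)>d+1$ and $h_{R/I}(e)<d+1$ call for injectivity and surjectivity, respectively.

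The two nontrivial subcases I would handle by ``sliding'' a degree to the place where the map becomes equidimensional. For injectivity, suppose $f\in[R]_d\setminus\{0\}$ has $(x+y)^tf\in[I]_e$; setting $d'=h_{R/I}(e)-1$, so that $d\le d'\le d_0$, the element $g=(x+y)^{d'-d}f$ is nonzero in $[R/I]_{d'}=[R]_{d'}$ since $R$ is a domain, yet $(x+y)^{e-d'}g=(x+y)^tf\in[I]_e$; this is a kernel element for $\times(x+y)^{e-d'}\colon[R/I]_{d'}\to[R/I]_e$, an equidimensional map with $h_{R/I}(d')=d'+1=h_{R/I}(e)$, contradicting the hypothesized bijectivity. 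For surjectivity (the case $h_{R/I}(e)=0$ being trivial), set $d''=h_{R/I}(e)-1<d$, so $h_{R/I}(d'')=d''+1=h_{R/I}(e)$; then $\times(x+y)^{e-d''}\colon[R/I]_{d''}\to[R/I]_e$ is one of the assumed bijections, and since it equals the composite $(\times(x+y)^t)\circ(\times(x+y)^{d-d''})$, surjectivity of the whole forces surjectivity of the outer factor $\times(x+y)^t$.

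The main obstacle is precisely this peak-crossing, non-equidimensional family: the weak Lefschetz property alone does not control it, because an injection followed by a surjection need not have maximal rank, so one really must transport the question to the equidimensional maps one is permitted to assume. The device that accomplishes this—multiplying by a power of $(x+y)$ to move the source (or target) to the unique degree where the source and target dimensions agree, using that $R$ is a domain so no nonzero vector is killed along the way—is the heart of the argument. Finally I would remark that the equidimensional maps with common value $\ne d+1$ are forced to have source degree $>d_0$ and are therefore already covered by the surjection argument above; this confirms that only the bijections with value $d+1$ need be imposed, matching the statement.
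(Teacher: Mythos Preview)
Your proof is correct and follows essentially the same strategy as the paper's: both use Theorem~\ref{thm:wlp} to make the single-step maps injective below the peak and surjective at or above it, dispose of the monotone composites, and then handle the remaining ``peak-crossing'' maps by factoring through the unique equidimensional map at source degree $h_{R/I}(d+t)-1$. The only cosmetic differences are that you organize the argument around the peak degree $d_0$ rather than $\indeg I=d_0+1$, and in the injectivity subcase you invoke the integrality of $R$ directly (so $(x+y)^{d'-d}f\neq0$) where the paper instead appeals to $\varphi_{d,h(d+t)-1}$ being a composition of injective single steps; these are equivalent since the relevant degrees lie below $\indeg I$.
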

\begin{proof}
    The presence of the strong Lefschetz property clearly implies the second condition.

    Suppose now that the second condition holds.  Let $\varphi_{d,d+t}$ be the map
    $\times (x+y)^t: [R/I]_d \rightarrow [R/I]_{d+t}$, and so
    $\varphi_{a,a+b+c} = \varphi_{a+b,a+b+c} \circ \varphi_{a,a+b}$.

    Theorem~\ref{thm:wlp} implies that $\varphi_{d,d+1}$ always has maximal rank.  In particular,
    $\varphi_{d,d+1}$ is injective for $0 \leq d < \indeg{I}$ and surjective for $\indeg{I} \leq d$.
    Since $\varphi_{d,d+t} = \varphi_{d+t-1,d+t} \circ \cdots \circ \varphi_{d,d+1}$, if
    $d +t \leq \indeg{I}$ (respectively, $d > \indeg{I}$), then each term in the composition
    is injective (respectively, surjective) and so $\varphi_{d,d+t}$ is injective (respectively, surjective).

    Now suppose that $d < \indeg{I} < d+t$.  Since $d+t > \indeg{I}$, then $h(d+t) - 1 \leq \indeg{I}$ and so
    $h(h(d+t) - 1) = h(d+t)$ by Proposition~\ref{pro:classify-hf}.  Thus, by assumption, $\varphi_{h(d+t) - 1, d+t}$
    is a bijection.  If $d < h(d+t) - 1$, then $\varphi_{d,d+t} = \varphi_{h(d+t)-1, d+t} \circ \varphi_{d,h(d+t)-1}$.
    Since $h(d+t)-1 \leq \indeg{I}$, $\varphi_{d,h(d+t)-1}$ is injective.  Hence $\varphi_{d,d+t}$ is injective as
    the composition of injective functions is injective.  On the other hand, if $d > h(d+t) - 1$, then
    $\varphi_{h(d+t) - 1, d+t} = \varphi_{d,d+t} \circ \varphi_{h(d+t)-1, d}$.  Hence $\varphi_{d,d+t}$ is
    surjective as $\varphi_{h(d+t) - 1, d+t}$ is, by assumption, a bijection.
\end{proof}

Let $I$ be a monomial ideal in $R$, and let $d \geq 0$ and $t \geq 1$ be any integers such that
$h(d) = h(d+t) = d+1$.  Suppose the ordered monomials $\{x^{b_0}y^{d+t-b_0}, \ldots, x^{b_d}y^{d+t-b_d}\}$ span $[R/I]_{d+t}$.
Note that the ordering implies that $0 \leq b_0 < \cdots < b_d \leq d+t$.
Then the map $\times(x+y)^t: [R/I]_{d} \longrightarrow [R/I]_{d+t}$ is given by the $(d+1) \times (d+1)$
matrix $N_{R/I}(d,d+t)$, where the entry $(i,j)$ is the coefficient on $x^{b_j}y^{d+t-b_j}$ in $x^{i}y^{d-i}(x+y)^t$,
i.e., $\binom{t}{b_j - i}$.

\begin{example} \label{exa:matrix-N}
    Let $I = (x^{10}, y^{7})$.  Then $h_{R/I}(5) = h_{R/I}(10) = 6$.  Thus $N_{R/I}(5, 5)$ is the
    $6 \times 6$ matrix
    \[
    \left(
    \begin{array}{cccccc}
        5 & 10 & 10 &  5 &  1 &  0 \\
        1 &  5 & 10 & 10 &  5 &  1 \\
        0 &  1 &  5 & 10 & 10 &  5 \\
        0 &  0 &  1 &  5 & 10 & 10 \\
        0 &  0 &  0 &  1 &  5 & 10 \\
        0 &  0 &  0 &  0 &  1 &  5 \\
    \end{array}
    \right).
    \]
    The determinant of this matrix is $210 = 2 \cdot 3 \cdot 5 \cdot 7$, and so the map
    $\times(x+y)^5: [R/I]_{5} \rightarrow [R/I]_{10}$ has maximal rank if and only if
    $\charf{K} = 0$ or $\charf{K} > 7$.
\end{example}

The determinant of $N_{-}(d,d+t)$ can be given by a closed form.

\begin{lemma}\label{lem:square}
    Let $I$ be a monomial ideal in $R$, and let $d \geq 0$ and $t \geq 1$ be integers so that
    $h_{R/I}(d) = h_{R/I}(d+t) = d+1$.  Then $|\det{N_{R/I}(d,d+t)}|$ is
    \[
            \prod_{0 \leq i < j \leq s-r} (b_{r+j} - b_{r+i})
            \prod_{i = 0}^{s-r} \frac{(t+i)!}{(t+s-b_{r+i})! (b_{r+i}-r)!},
    \]
    where the ordered monomials $\{x^{b_0}y^{d+t-b_0}, \ldots, x^{b_{d}}y^{d+t-b_{d}}\}$ span
    $[R/I]_{d+t}$, $r := \max(\{0\} \cup \{k+1 \st b_k = k \})$, and $s := \min(\{d\} \cup \{k-1 \st b_k = t + k \})$.

    If $r = s+1$, then the determinant is one; otherwise, the largest term of the
    above closed form is $w_{R/I}(d+t)-1$, and it appears exactly once.
\end{lemma}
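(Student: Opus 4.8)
The plan is to recognize the matrix $N_{R/I}(d,d+t)$ as a binomial (generalized Vandermonde) matrix and to evaluate its determinant by explicitly factoring out the structure imposed by the indices $r$ and $s$. First I would unwind the definitions of $r$ and $s$. The entry $(i,j)$ of the matrix is $\binom{t}{b_j-i}$, which vanishes unless $0 \le b_j-i \le t$. The index $r$ records how many of the smallest spanning monomials are ``forced'': when $b_k = k$ for $k < r$, the corresponding column has a single nonzero entry $\binom{t}{0}=1$ sitting on the diagonal (since $b_k - k = 0$), because all monomials $x^k y^{d-k}$ with $k < r$ multiply into $[R/I]_{d+t}$ only through $x^{b_k}y^{d+t-b_k}$. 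Symmetrically, $s$ records the largest forced monomials from the top, where $b_k = t+k$ forces $\binom{t}{t}=1$. My first step is therefore to observe that the first $r$ columns and the last $d-s$ columns each contribute a single $1$ along the diagonal, so by Laplace expansion the determinant of the full matrix equals the determinant of the central $(s-r+1)\times(s-r+1)$ submatrix indexed by $r \le i,j \le s$. This immediately handles the edge case $r = s+1$, where the central block is empty and the determinant is $1$.

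Next I would evaluate the determinant of the central block, whose $(i,j)$ entry (for $r \le i,j \le s$) is $\binom{t}{b_j - i} = \frac{t!}{(b_j-i)!\,(t-b_j+i)!}$. The key maneuver is to pull out, from each \emph{column} $j$, the common factor and rewrite $\binom{t}{b_j - i}$ so as to expose polynomial dependence on the row index $i$. Writing the entry as
\[
    \binom{t}{b_j - i} = \frac{1}{(t+i - b_j)!\,(b_j - i)!}\cdot t!,
\]
one sees that after clearing the factorials that depend only on $j$ (namely pulling $\frac{(t+?)!}{\cdots}$ out of each column), the remaining matrix has entries that are, up to sign, falling factorials in $b_j$ of degree equal to the row position. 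Concretely, the entry becomes proportional to a product $\prod (b_j - i - \ell)$, which is a polynomial in $b_j$ of degree $i - r$ in row $i$. A matrix whose $(i,j)$ entry is a polynomial of degree $i-r$ in a variable $b_j$ is a generalized Vandermonde matrix, and its determinant is the ordinary Vandermonde product $\prod_{r \le i < j \le s}(b_j - b_i)$ times the product of the leading coefficients of those polynomials. Reindexing by $i \mapsto r+i$, $j \mapsto r+j$ and tracking the leading coefficients recovers exactly the two advertised products: the Vandermonde factor $\prod_{0 \le i < j \le s-r}(b_{r+j}-b_{r+i})$ and the factorial factor $\prod_{i=0}^{s-r}\frac{(t+i)!}{(t+s-b_{r+i})!\,(b_{r+i}-r)!}$.

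The main obstacle, and where I would spend the most care, is the second step: correctly performing the column normalization so that the residual matrix is genuinely triangular-reducible to a Vandermonde form, and then matching the leading coefficients and factorial bookkeeping against the exact expression in the statement. The off-by-one alignment of the factorial exponents $(t+i)!$, $(t+s-b_{r+i})!$, and $(b_{r+i}-r)!$ is delicate, and one must verify that the degrees $b_{r+i}-r$ and $t+s-b_{r+i}$ stay nonnegative precisely because $r$ and $s$ were defined to strip away the forced entries, so that $r \le b_{r+i}$ and $b_{r+i} \le t+s$ for all $i$ in range. For the final sentence, since all factors in the Vandermonde and factorial products are positive integers, the largest single factor is the largest difference $b_{r+j}-b_{r+i}$, attained by the extreme pair $b_s - b_r$; I would then observe that $b_s - b_r$ equals the span of the nonforced monomials, which is exactly $w_{R/I}(d+t) - 1$ by the definition of the width (the widest gap between the smallest and largest degree-$(d+t)$ monomials of $I$, translated through the complementary spanning monomials of the quotient), and that this extreme difference occurs in exactly one Vandermonde factor, giving uniqueness.
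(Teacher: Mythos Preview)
Your block-reduction step is the same idea as the paper's: it writes
\[
N_{R/I}(d,d+t) = \begin{pmatrix} U & A & 0 \\ 0 & \hat{N} & 0 \\ 0 & B & L \end{pmatrix},
\]
with $U$ upper-triangular and $L$ lower-triangular (ones on the diagonal), so $\det N = \det \hat{N}$. One small correction: the first $r$ columns do \emph{not} each have a single nonzero entry; e.g.\ column $1$ (with $b_1=1$) has $\binom{t}{1}=t$ in row $0$ as well as $1$ in row $1$. What is true, and sufficient, is that the upper-left $r\times r$ block is upper-triangular and the lower-right $(d-s)\times(d-s)$ block is lower-triangular. For $\det \hat{N}$ the paper simply invokes the binomial-determinant identity \cite[Equation~(12.5)]{CEKZ}, whereas you propose a direct Vandermonde reduction. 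That is a legitimate, more self-contained alternative, though the column-normalization you sketch is vague and would need substantially more care to execute.

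The genuine gap is in your final paragraph, where you misidentify the largest factor. You claim the largest single factor in the closed form is the extreme Vandermonde difference $b_s - b_r$ and that this equals $w_{R/I}(d+t)-1$. Both are false. From the definitions of $r$ and $s$ (when $r\le s$) one has $b_r \geq r+1$ and $b_s \leq t+s-1$, so $b_s - b_r \leq t+s-r-2$. Meanwhile the numerator factorial $(t+s-r)!$ (from $i=s-r$ in the second product) contributes the factor $t+s-r$, which strictly exceeds every Vandermonde difference and every denominator factor, since $t+s-b_{r+i} \leq t+s-r-1$ and $b_{r+i}-r \leq t+s-r-1$. Thus the largest factor is $t+s-r$, occurring exactly once, not $b_s-b_r$. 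As for the width: the smallest exponent missing from $\{b_0,\ldots,b_d\}$ is $r$ (since $b_0,\ldots,b_{r-1}=0,\ldots,r-1$ but $b_r>r$) and the largest missing is $t+s$ (since $b_{s+1},\ldots,b_d=t+s+1,\ldots,t+d$ but $b_s<t+s$), so the extreme monomials of $[I]_{d+t}$ are $x^r y^{d+t-r}$ and $x^{t+s}y^{d-s}$, giving $w_{R/I}(d+t)=t+s-r+1$. Hence $w_{R/I}(d+t)-1=t+s-r$, which matches the factorial contribution, not the Vandermonde one.
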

\begin{proof}
    For $0 \leq i < r$, we have $b_i = i$, and so the $(i,j)$ entry of
    $N_{R/I}(d,d+t)$  is $\binom{t}{j-i}$, i.e., $1$ if $i = j$ and $0$ if $i > j$.
    Similarly, for $s < j \leq d$, we have that $b_j = t + j$ and so
    the $(i,j)$ entry of $N_{R/I}(d,d+t)$ is $\binom{t}{t+j-i}$, i.e., is $1$ if
    $i = j$ and $0$ if $i < j$.

    Partitioning $N_{R/I}(d,d+t)$ into a block matrix with square diagonal matrices with
    sizes $r, s-r+1$, and $d - s$, respectively, yields
    \[
        N_{R/I}(d,d+t) = \left(
                    \begin{array}{ccc}
                        U & A       & 0 \\
                        0 & \hat{N} & 0 \\
                        0 & B       & L
                    \end{array}
                \right),
    \]
    where $U$ is a square upper-triangular matrix with ones on the diagonal, $L$ is a square
    lower-triangular matrix with ones on the diagonal, and $\hat{N}$ is an $(s-r+1) \times (s-r+1)$
    square matrix with entry $(i,j)$ given by $\binom{t}{b_{r+j}-(r+i)}$.  Using the block
    matrix formula for the determinant (twice), we have
    \[
        \det{N_{R/I}(d,d+t)} = \det{U} \cdot \det{\hat{N}} \cdot \det{L} = \det{\hat{N}}.
    \]

    Set $A := t$, $n := s-r+1$, and $L_j := b_{r+j-1}-r+1$.  Then the determinant evaluation of $\hat{N}$,
    hence of $N_{R/I}(d,d+t)$, follows immediately from~\cite[Equation~(12.5)]{CEKZ}.  Re-indexing to start from $0$
    instead of $1$ yields the stated result.

    If $r = s+1$, then the products are all empty, hence the determinant is one.  Suppose $r \leq s$.
    Notice that $b_s < t+s$ and $b_{r} > r$.  Hence the largest term in the first product
    $b_{s} - b_{r}$ is less than $t+s-r$, and the largest term in the numerator of the
    second product is $t+s-r$.  The largest term in the denominator of the second product is
    the maximum of $t+s-b_{r}$ and $b_{s}-r$, both of which are less than $t+s-r$.  Hence the
    largest term of the products in the formula is $t+s-r$.

    Notice that by the definitions of $r$ and $s$, $x^r y^{d+t-r}$ and $x^{t+s} y^{d-s}$ are
    the lexicographically smallest and largest monomials in $[I]_{d+t}$, respectively.  Hence
    $w_{R/I}(d+t) = t+s-r+1$ and so the largest term of the products in the formula is $w_{R/I}(d+t)-1$.
\end{proof}

The matrix $\hat{N}$ in the preceding proof is the matrix $N_{R/J}(s-r,s-r+t)$, where $J$ is the ideal
$(x^{b_r-r}y^{t+s-b_r}, \ldots, x^{b_s-r}y^{t+s-b_s}) + (x,y)^{t+s-r+1}$.

\begin{example}\label{exa:N-to-hat-N}
    Let $I = (x^{15}, x^{10}y^2, x^2y^9, y^{15})$.  Then $h_{R/I}(9) = 10 = h_{R/I}(14)$, so $N_{R/I}(9, 14)$
     is the $10 \times 10$ matrix
    \[
    \left(
    \begin{array}{cc|cccccc|cc}
            1& 5& 10&  5&  1&  0&  0&  0& 0& 0  \\
        0& 1& 10& 10&  5&  1&  0&  0& 0& 0  \\ \hline
        0& 0&  5& 10& 10&  5&  1&  0& 0& 0  \\
        0& 0&  1&  5& 10& 10&  5&  1& 0& 0  \\
        0& 0&  0&  1&  5& 10& 10&  5& 0& 0  \\
        0& 0&  0&  0&  1&  5& 10& 10& 0& 0  \\
        0& 0&  0&  0&  0&  1&  5& 10& 0& 0  \\
        0& 0&  0&  0&  0&  0&  1&  5& 0& 0  \\ \hline
        0& 0&  0&  0&  0&  0&  0&  1& 1& 0  \\
        0& 0&  0&  0&  0&  0&  0&  0& 5& 1  \\
    \end{array}
    \right).
    \]
    Notice that the upper-left $2 \times 2$ matrix is upper-triangular, the lower-left
    $2 \times 2$ matrix is lower-triangular, and the central $6 \times 6$ matrix has determinant
    $210 = 2 \cdot 3 \cdot 5 \cdot 7$.  Thus the determinant of $N_{R/I}(9,14)$ has magnitude
    $210$, and so the map $\times(x+y)^5: [R/I]_{9} \rightarrow [R/I]_{14}$ has maximal rank if
    and only if $\charf{K} = 0$ or $\charf{K} > 7$.

    Indeed, the central $6 \times 6$ matrix is the same as the matrix in Example~\ref{exa:matrix-N},
    and is the matrix $\hat{N}$ in the proof of Lemma~\ref{lem:square}.  Furthermore, following
    the preceding remark, we note that $d = 9$, $t = 5$, $r = 2$, and $s = 7$ for $R/I$ in degree $14$.
    Hence $\hat{N}$ is the matrix $N_{R/J}(s-r, s-r+t) = N_{R/J}(5, 10)$, where
    $J = (x^{10}, x^3y^7, x^2y^8, xy^9, y^{10})$.  Notice that $R/J$ and the quotient
    considered in Example~\ref{exa:matrix-N} are the same in degrees $5$ and $10$.
\end{example}

Analysing the formula in Lemma~\ref{lem:square}, we determine exactly when $|\det{N}| = 1$.

\begin{corollary}\label{cor:formula-1}
    Let $I$ be a monomial ideal in $R$, and let $d \geq 0$ and $t \geq 1$ be integers so that
    $h_{R/I}(d) = h_{R/I}(d+t) = d+1$.  Then the following statements are equivalent:
    \begin{enumerate}
        \item $|\det{N_{R/I}(d,d+t)}| = 1$,
        \item $w_{R/I}(d+t) = t$, and
        \item $I$ is lexsegment in degree $d+t$.
    \end{enumerate}

    Specifically, if $t = 1$, then $|\det{N_{R/I}(d,d+t)}| = 1$.
\end{corollary}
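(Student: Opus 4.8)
The plan is to establish the cycle of implications together with the identity $w_{R/I}(d+t)=t+s-r+1$ recorded inside the proof of Lemma~\ref{lem:square}, and then to read the displayed $t=1$ statement off the equivalence. First I would dispose of (ii)$\Leftrightarrow$(iii), which costs nothing: since $h_{R/I}(d+t)=d+1$ by hypothesis, the lexsegment defect of Lemma~\ref{lem:lex-seg-dist} in degree $d+t$ is $w_{R/I}(d+t)+(d+1)-(d+t+1)=w_{R/I}(d+t)-t$, and that lemma says the defect vanishes exactly when $I$ is lexsegment in degree $d+t$; hence (iii) holds if and only if $w_{R/I}(d+t)=t$, which is (ii).

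Next I would connect (i) to (ii) through Lemma~\ref{lem:square}. Because $w_{R/I}(d+t)=t+s-r+1$, condition (ii) is literally the statement $r=s+1$, and in that case Lemma~\ref{lem:square} already gives $|\det N_{R/I}(d,d+t)|=1$; this is the implication (ii)$\Rightarrow$(i). The content is the converse, equivalently $r\le s\Rightarrow|\det N_{R/I}(d,d+t)|\ge 2$. Before attacking it I note that this regime has $M:=w_{R/I}(d+t)-1=t+s-r\ge 2$: if $t=1$ then $h_{R/I}(d+1)=d+1$ forces exactly one degree-$(d+1)$ monomial into $I$, so $w_{R/I}(d+1)=1=t$ and $r=s+1$, contradicting $r\le s$; hence $r\le s$ forces $t\ge 2$ and $M\ge t\ge 2$. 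By Lemma~\ref{lem:square}, $M$ is the unique largest factor of the closed form and occurs exactly once, inside the single numerator factorial $(t+s-r)!$, with every difference $b_{r+j}-b_{r+i}$ and every denominator factorial argument strictly smaller.

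I expect the non-cancellation step to be the main obstacle, and I would not try to finish it with a crude divisibility count. ``Unique largest factor, appearing once'' does not by itself force a quotient of integers above $1$ (compare $4\cdot 1/(2\cdot 2)=1$), and even the largest prime $q\le M$ can fail to divide the determinant: for $I=(x^4,y^4)$ one has $|\det N_{R/I}(2,4)|=4$, which is not divisible by $q=3$. Instead I would use the combinatorial content of the determinant. The matrix $\hat N$ of Lemma~\ref{lem:square} has $(i,j)$ entry $\binom{t}{b_{r+j}-(r+i)}$, which counts monotone lattice paths between points fixed by the sources $a_i=r+i$ and the sinks $c_j=b_{r+j}$; by the Lindstr\"om--Gessel--Viennot lemma, $\det\hat N$ is the number of families of pairwise non-intersecting such paths, a nonnegative integer, with the identity family always present. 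When $r\le s$ the configuration is \emph{not} a rigid staircase: the lexsegment defect in degree $d+t$ is $w_{R/I}(d+t)-t=s-r+1\ge 1$, so the degree-$(d+t)$ monomials of $I$ fail to be consecutive, and this genuine slack lets one reroute through a gap to produce a second non-intersecting family, forcing $\det\hat N\ge 2$. As a sanity check for the base case $s=r$, the closed form collapses to $|\det N|=\binom{t}{b_r-r}$ with $1\le b_r-r\le t-1$, which is at least $t\ge 2$, matching the claim directly.

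Finally, the displayed special case drops out: for $t=1$ the hypothesis $h_{R/I}(d)=h_{R/I}(d+1)=d+1$ again forces $w_{R/I}(d+1)=1=t$, so (ii) holds and hence $|\det N_{R/I}(d,d+1)|=1$.
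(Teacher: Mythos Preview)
Your (ii)$\Leftrightarrow$(iii), (ii)$\Rightarrow$(i), and the $t=1$ clause agree with the paper. For the remaining implication you depart from it: the paper never invokes lattice paths here but instead runs a short induction directly on the closed form $D(r,s,t,\{b_r,\dots,b_s\})$ of Lemma~\ref{lem:square}. When $r<s$ and $b_s<s+t-1$ it shows $D(r,s,t,\cdot)>D(r,s,t-1,\cdot)$; when $b_s=s+t-1$ it shows $D(r,s,t,\cdot)>D(r,s-1,t,\cdot)$. Either move strictly dominates a valid smaller instance (always $\ge1$), and the base $r=s$ gives $D=\binom{t}{b_r-r}\ge t\ge2$, which you already noted. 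This is purely algebraic and self-contained.

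Your LGV route is a reasonable alternative---the paper itself later records this interpretation in Section~\ref{sub:nilps}---but as written the crucial step is only asserted. The sentence ``this genuine slack lets one reroute through a gap to produce a second non-intersecting family'' is not a construction, and the naive reading (perturb one path, leave the rest fixed) actually fails: switching only path $0$ from up-first to right-first while keeping path $1$ in the up-first configuration makes them meet at $(r+1,d-r)$. What \emph{does} work is to compare the two global extremal families, every path up-then-right versus every path right-then-up; each is non-intersecting because $b_{r+i}<b_{r+i+1}$ strictly staggers all the corners, and they differ in every path since $1\le b_{r+i}-(r+i)\le t-1$. Once that explicit pair is exhibited your argument closes; without it, not-(ii)$\Rightarrow$not-(i) remains a heuristic. (You also take for granted that an identity family exists at all; this is true and follows either from the same extremal construction or from positivity of the product in Lemma~\ref{lem:square}, but it deserves a word.)
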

\begin{proof}
    By Lemma~\ref{lem:lex-seg-dist}, $w_{R/I}(d+t) = d+t-1 - h_{R/I}(d+t) = t$ if and only if
    $I$ is lexsegment in degree $d$, hence (ii) is equivalent to (iii).

    Suppose $[R/I]_{d+t}$ is spanned by the ordered monomials
    $\{x^{b_0}y^{d+t-b_0}, \ldots, x^{b_{d}}y^{d+t-b_{d}}\}$.  Set
    $r := \max(\{0\} \cup \{k+1 \st b_k = k \})$ and $s := \min(\{d\} \cup \{k-1 \st b_k = t + k \})$.
    Then $r < b_r < \cdots < b_s < s+t$.  By Lemma~\ref{lem:square}, $|\det{N_{R/I}(d,d+t)}|$ is
    \[
            \prod_{0 \leq i < j \leq s-r} (b_{r+j} - b_{r+i})
            \prod_{i = 0}^{s-r} \frac{(t+i)!}{(t+s-b_{r+i})! (b_{r+i}-r)!}.
    \]
    Further, notice that $w_{R/I}(d+t) = s-r+t+1$.

    Now we prove (i) is equivalent to (ii).
    Let $r, s,$ and $t$ be integers such that $0 \leq r \leq s+1$ and $t \geq 2$.
    Let $b_r, \ldots, b_s$ be integers such that $r < b_r < \cdots < b_s < s+t$.
    Define $D(r, s, t, \{b_r, \ldots, b_s\})$ to be
    \[
            \prod_{0 \leq i < j \leq s-r} (b_{r+j} - b_{r+i})
            \prod_{i = 0}^{s-r} \frac{(t+i)!}{(t+s-b_{r+i})! (b_{r+i}-r)!}.
    \]
    Clearly $D(r, s, t, \{b_r, \ldots, b_s\}) \geq 1$ for all valid arguments.

    Notice that $D(r, s, t, \{b_r, \ldots, b_s\}) = 1$ if $r = s + 1$, as the products
    are all empty.  Furthermore, if $t = 1$ and $r \leq s$, then $r < b_r < \cdots < b_s < s+1$ implies that there
    are at least $s-r+1$ distinct integers exclusively between $r$ and $s+1$; however, there are only $s-r$
    such integers.  Hence if $t = 1$, then $r = s+1$, and we can define $D(s+1, s+1, 1, \emptyset) = 1$ for
    every $s$.  (We also note that the case when $t=1$ follows immediately from Theorem~\ref{thm:wlp}.)

    \emph{Step $1$: Base case.} Note that $D(s+1, s+1, t, \emptyset) = 1$ for all $s$ and $t$.
    Further, if $r = s$ and $t \geq 2$, then $D(r, r, t, \{b_r\}) = \binom{t}{b_r-r} \geq t$, as $1 \leq b_r - r \leq t-1$.

    \emph{Step $2$: Increasing $t$.} Assume $r < s$ and $t \geq 1$.  Clearly since $b_s < s+t$, then $b_s<s+t+1$.
    Thus $(r,s,t+1,\{b_r, \ldots, b_s\})$ forms a valid argument for $D(\cdot)$.  Indeed, we can rewrite
    $D(r,s,t+1,\{b_r, \ldots, b_s\})$ in terms of $D(r,s,t,\{b_r, \ldots, b_s\})$ as follows:
    \begin{equation*}
        \begin{split}
            D(r,s,t+1,\{b_r, \ldots, b_s\})
            & = \prod_{0 \leq i < j \leq s-r} (b_{r+j} - b_{r+i}) \prod_{i = 0}^{s-r} \frac{((t+1)+i)!}{((t+1)+s-b_{r+i})! (b_{r+i}-r)!} \\
            & = \prod_{i=0}^{s-r} \frac{((t+1)+i)}{((t+1)+s-b_{r+i})} D(r,s,t,\{b_r, \ldots, b_s\}).
        \end{split}
    \end{equation*}
    For $0 \leq i \leq s-r$, we have $b_{r+i} > r+i$ and so $(t+1)+s-b_{r+i} \leq t+s+1 - (r+i+1) = t+s-r-i$.  Further still,
    $t+1+i = t+s-r-j+1$, where $j = s-r-i$, i.e., $0 \leq j \leq s-r$.  Thus we have
    \[
        \prod_{i=0}^{s-r} (t+1+i) = \prod_{j=0}^{s-r}(t+s-r-j+1) > \prod_{j=0}^{s-r}(t+s-r-j) \geq \prod_{i=0}^{s-r}(t+1+s-b_{r+i}).
    \]
    Hence $\prod_{i=0}^{s-r} \frac{(t+1)+i)}{((t+1)+s-b_{r+i})} > 1$ and $D(r,s,t+1,\{b_r, \ldots, b_s\}) > D(r,s,t,\{b_r, \ldots, b_s\}) \geq 1$.
    Thus, if $t \geq 2$ and $b_s < s+t-1$, then $D(r,s,t\{b_r, \ldots, b_s\}) > 1$.

    \emph{Step $3$: When $b_s = s+t-1$.}  Suppose $r < s$, $t \geq 2$, and $b_s = s+t-1$.
    Since $r \leq s$, then $r \leq (s-1)+1$ and so $(r, s-1, t, \{b_r, \ldots, b_{s-1}\})$ forms a
    valid argument for $D(\cdot)$.  Indeed, we can rewrite $D(r,s,t,\{b_r, \ldots, b_s\})$ in terms
    of $D(r, s-1, t, \{b_r, \ldots, b_{s-1}\})$ as follows:
    \begin{equation*}
        \begin{split}
            & D(r,s,t,\{b_r, \ldots, b_s\}) \\
            =& \prod_{0 \leq i < j \leq s-r} (b_{r+j} - b_{r+i}) \prod_{i = 0}^{s-r} \frac{(t+i)!}{(t+s-b_{r+i})! (b_{r+i}-r)!} \\
            =& \left( \frac{(t+s-r)!}{(t+s-b_s)!(b_s-r)!} \prod_{0 \leq i < s-r} \frac{b_s - b_{r+i}}{t+s-b_{r+i}} \right)
                D(r, s-1, t, \{b_r, \ldots, b_{s-1}\}) \\
            =& \left( (t+s-r) \prod_{0 \leq i < s-r} \frac{t+s-1 - b_{r+i}}{t+s-b_{r+i}} \right) D(r, s-1, t, \{b_r, \ldots, b_{s-1}\}) \\
            =& \frac{(t+s-r)(t+s-1-b_r)}{t+s-b_{s-1}} D(r, s-1, t, \{b_r, \ldots, b_{s-1}\}).
        \end{split}
    \end{equation*}
    Notice though, since $r < b_{s-1}$, we have $t+s-r > t+s-b_{s-1}$ and so $\frac{t+s-r}{t+s-b_{s-1}} > 1$.
    Thus, $D(r,s,t,\{b_r, \ldots, b_s\}) > D(r, s-1, t, \{b_r, \ldots, b_{s-1}\}) \geq 1$.

    \emph{Conclusion.} We thus see that $D(r,s,t,\{b_r, \ldots, b_s\}) = 1$ if and only if $r = s+1$.
    This is equivalent to $w_{R/I}(d+t) = t$, as $w_{R/I}(d+t) = s-r+t+1$.
\end{proof}

\section{The strong Lefschetz property}\label{sec:slp}

Let $I$ be a homogeneous ideal of $S = K[x_1, \ldots, x_n]$, and fix a term order $<$ on the monomials of $S$
(e.g., the lexicographic order).  The initial ideal $\inid_{<}(g \cdot I)$ that is fixed on
some Zariski open subset of $GL_n(K)$ is called the \emph{generic initial ideal} of $I$ with respect to
$<$ and is denoted by $\gin{I} = \gin_<{I}$.

\begin{remark} \label{rem:gin-pos-char}
    The generic initial ideal is known to be strongly stable if the characteristic of $K$ is zero (see, e.g.,
    \cite[Proposition~4.2.6]{HH}).  For ideals in $K[x,y]$, being strongly stable is equivalent to being lexsegment.
    However, the generic initial ideal is not so well-behaved in positive characteristic.  Indeed, \cite[Example~4.2.8]{HH}
    shows that, for every prime $p > 0$, $\gin{(x^p, y^p)} = (x^p, y^p)$ if the characteristic of $K$ is $p$.
\end{remark}

Recall that an artinian ideal has the weak (strong) Lefschetz property exactly when its generic initial ideal
with respect to the reverse lexicographic order has the weak (strong) Lefschetz property.

\begin{proposition}{\cite[Proposition~2.8]{Wi}} \label{pro:slp-gin}
    Let $I$ be a homogeneous artinian ideal in $S = K[x_1, \ldots, x_n]$, and let $J$ be the generic initial
    ideal of $I$ with respect to the reverse lexicographic order.  Then $S/I$ has the weak (strong) Lefschetz
    property if and only if $S/J$ has the weak (strong) Lefschetz property.
\end{proposition}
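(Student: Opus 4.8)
The plan is to reduce the statement to a comparison of multiplication-by-$x_n$ maps and then to exploit the one feature that singles out the reverse lexicographic order among all term orders: its compatibility with colon ideals by the last variable. Throughout write $J = \gin_<(I)$ for the reverse lexicographic generic initial ideal.

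First I would record that the weak and strong Lefschetz properties are invariants of the graded-algebra isomorphism class, and that $S/I \cong S/(gI)$ for every $g \in GL_n(K)$ via the induced linear change of coordinates. Since $\inid_<(gI) = J$ for all $g$ in a dense Zariski-open set $U \subseteq GL_n(K)$, and since $g^{-1}(x_n)$ runs over a dense set of linear forms as $g$ varies, I can choose $g \in U$ so that, in addition, $x_n$ is a Lefschetz-generic linear form for $gI$; here ``Lefschetz-generic'' means that $x_n$ realizes, in each of the finitely many relevant bidegrees $(d,d+t)$, the maximal value $\rho_{d,t} := \max_\ell \rank(\times \ell^t)$ attainable on the artinian algebra. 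Replacing $I$ by $I' := gI$, I have reduced to the case $J = \inid_<(I')$ with $x_n$ optimal for $S/I'$, and it suffices to prove that $S/I'$ has the strong (weak) Lefschetz property if and only if $S/J$ does.

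The key step is an identity of Hilbert functions coming from reverse lex. Taking initial ideals preserves Hilbert functions, so $\dim_K[S/I']_d = \dim_K[S/J]_d$ for all $d$; the defining feature of the reverse lexicographic order (Bayer--Stillman) is that it commutes with colon ideals by the last variable, giving $J : x_n^t = \inid_<(I' : x_n^t)$ and hence $\dim_K[S/(I':x_n^t)]_d = \dim_K[S/(J:x_n^t)]_d$ for all $d$ and $t$. Combining this with the elementary rank formula $\rank\bigl(\times \ell^t : [S/A]_d \to [S/A]_{d+t}\bigr) = \dim_K[S/(A:\ell^t)]_d$ applied to $\ell = x_n$, I conclude that the multiplication-by-$x_n^t$ maps on $S/I'$ and on $S/J$ have equal rank in every bidegree.

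It remains to see that $x_n$ is also optimal for the monomial algebra $S/J$, for then ``$x_n$ is a strong Lefschetz element'' is equivalent to ``has the strong Lefschetz property'' for each of the two algebras, and the rank equality closes both directions at once. This follows from a semicontinuity squeeze: the standard Gröbner (weight) degeneration produces a flat family over $\mathbb{A}^1_K$ with generic fibre $S/I'$ and special fibre $S/J$, along which the entries of each map $\times \ell^t$ vary polynomially, so lower semicontinuity of rank gives $\rank(\times \ell^t \text{ on } S/J) \le \rank(\times \ell^t \text{ on } S/I')$ for every fixed $\ell$. Hence for all $\ell$ one has $\rank(\times \ell^t \text{ on } S/J) \le \rho_{d,t}(S/I') = \rank(\times x_n^t \text{ on } S/I') = \rank(\times x_n^t \text{ on } S/J)$, the last two equalities being the optimality of $x_n$ for $S/I'$ and the reverse-lex Hilbert function identity; so $x_n$ attains $\max_\ell$ for $S/J$ as well, and the weak case is the specialization $t = 1$. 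I expect the main obstacle to be establishing the reverse-lexicographic colon identity $J : x_n^t = \inid_<(I' : x_n^t)$ cleanly — this is precisely where reverse lex is indispensable and a general term order would fail — together with the genericity bookkeeping needed to make $x_n$ simultaneously compute $J$ and serve as a Lefschetz-generic form.
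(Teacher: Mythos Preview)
The paper does not give its own proof of this proposition; it is quoted verbatim as \cite[Proposition~2.8]{Wi} and used as a black box. So there is nothing in the present paper to compare your argument against.

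That said, your outline is essentially the standard proof (and close to Wiebe's). The three ingredients you isolate are the right ones: (i) after a generic linear change $g$, one has simultaneously $\inid_<(gI)=J$ and $x_n$ realising the maximal rank $\rho_{d,t}$ on $S/(gI)$ for every relevant bidegree (finitely many open conditions intersected with the gin-defining open set); (ii) the reverse-lex identity $\inid_<(I':x_n^t)=\inid_<(I'):x_n^t$, which follows from the elementary fact that in revlex $x_n\mid \inid_<(f)$ forces $x_n\mid f$, and which via the kernel description $\ker(\times x_n^t)_d=[(I':x_n^t)/I']_d$ gives equality of ranks of $\times x_n^t$ on $S/I'$ and $S/J$; and (iii) semicontinuity along the Gr\"obner degeneration to conclude that $x_n$ is also rank-maximising on $S/J$. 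Each step is sound. The only place to be slightly careful is your semicontinuity step: you need a common trivialisation of the graded pieces over the degeneration (flatness plus constant Hilbert function gives this), so that $\times\ell^t$ becomes an honest algebraic family of matrices; once that is said the lower semicontinuity of rank is immediate.
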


Further, for $R = K[x,y]$ the lexicographic and reverse lexicographic orders are identical.  In some cases,
we can use the above result to lift statements about monomial ideals to statements about homogeneous ideals.

\subsection{Bounds on the absence of the strong Lefschetz property}\label{sub:bounds}~

We first use the width function to bound the characteristics in which the strong Lefschetz property
can be absent.  See Corollary~\ref{cor:w-sharpness} for a classification of when this bound is sharp.

\begin{theorem}\label{thm:w-bound}
    Let $I$ be a monomial artinian ideal in $R = K[x,y]$, where $K$ is a field of characteristic $p > 0$.
    Then $R/I$ has the strong Lefschetz property if $p \geq w_{R/I}(\reg{R/I})$.
\end{theorem}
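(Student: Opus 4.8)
The plan is to reduce the strong Lefschetz property to the invertibility over $K$ of the finitely many multiplication matrices $N_{R/I}(d,d+t)$, and then to bound the prime factors of their determinants using the closed form in Lemma~\ref{lem:square}. First I would note that by Proposition~\ref{pro:mono} it suffices to check that $x+y$ is a strong Lefschetz element, and by Lemma~\ref{lem:slp-same-hf} this amounts to showing that $\times(x+y)^t : [R/I]_d \to [R/I]_{d+t}$ is a bijection whenever $h_{R/I}(d) = h_{R/I}(d+t) = d+1$. In each such case the map is represented by the square matrix $N_{R/I}(d,d+t)$, so the problem becomes the purely arithmetic one of showing $\det N_{R/I}(d,d+t) \neq 0$ in $K$, i.e.\ that $\charf{K} = p$ does not divide the integer $|\det N_{R/I}(d,d+t)|$.

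Next I would invoke Lemma~\ref{lem:square}, which computes $|\det N_{R/I}(d,d+t)|$ as a ratio of products of factorials and differences $b_{r+j}-b_{r+i}$ and, crucially, asserts that the largest integer factor appearing in this closed form is $w_{R/I}(d+t)-1$. Since the determinant is a positive integer, every prime dividing it must divide the numerator of the closed form; but the numerator is a product of positive integers each at most $w_{R/I}(d+t)-1$, so every such prime is at most $w_{R/I}(d+t)-1$. Consequently $p$ divides $|\det N_{R/I}(d,d+t)|$ only if $p \leq w_{R/I}(d+t)-1 < w_{R/I}(d+t)$.

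It then remains to compare $w_{R/I}(d+t)$ with $w_{R/I}(\reg{R/I})$. Because $[R/I]_{d+t}\neq 0$ forces $d+t \leq \reg{R/I}$, and because Proposition~\ref{pro:classify-width}(iv) shows that the width function is strictly increasing wherever it is positive, we have $w_{R/I}(d+t) \leq w_{R/I}(\reg{R/I})$. Combining this with the hypothesis $p \geq w_{R/I}(\reg{R/I})$ gives $p \geq w_{R/I}(d+t) > w_{R/I}(d+t)-1$, so $p$ cannot divide $|\det N_{R/I}(d,d+t)|$. Hence every relevant matrix is invertible over $K$, every relevant multiplication map is a bijection, and $R/I$ has the strong Lefschetz property.

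The main obstacle, already dispatched by Lemma~\ref{lem:square}, is the passage from ``the determinant is an integer whose closed form involves only integer factors at most $w_{R/I}(d+t)-1$'' to ``every prime factor of the determinant is at most $w_{R/I}(d+t)-1$.'' This step relies on the determinant being a genuine integer, so that the denominator's primes cancel and all prime factors of the quotient already appear in the numerator; one must be careful to apply the bound to the integer factors of the numerator rather than to the value of the quotient. Everything else is bookkeeping with the monotonicity of $w$ and the reduction furnished by Lemma~\ref{lem:slp-same-hf}.
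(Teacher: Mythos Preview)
Your proposal is correct and follows essentially the same route as the paper: reduce via Lemma~\ref{lem:slp-same-hf} to the invertibility of the matrices $N_{R/I}(d,d+t)$, bound the prime factors of their determinants by $w_{R/I}(d+t)-1$ using Lemma~\ref{lem:square}, and then use the monotonicity of the width function from Proposition~\ref{pro:classify-width} to conclude. Your added remark that one must use the integrality of the determinant to pass from ``largest factor in the closed form'' to ``largest prime factor of the value'' is a point the paper leaves implicit, and it is worth making explicit.
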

\begin{proof}
    Let $d \geq 0$ and $t \geq 1$ be integers such that $h(d) = h(d+t) = d+1$.  Then the largest factor of
    the determinant of $N_{R/I}(d,d+t)$ is $w_{R/I}(d+t) - 1$ by Lemma~\ref{lem:square}.  Hence $N_{R/I}(d,d+t)$
    has maximal rank if $p \geq w_{R/I}(d+t)$.

    Further, we have that $w_{R/I}(\reg{R/I}) \geq w_{R/I}(i)$ for all $0 \leq i \leq \reg{R/I}$ by
    Proposition~\ref{pro:classify-width}.  This implies that every matrix $N_{R/I}(d,d+t)$ has maximal rank if $p \geq w_{R/I}(\reg{R/I})$.
    Therefore, $R/I$ has the strong Lefschetz property by Lemma~\ref{lem:slp-same-hf} if $p \geq w_{R/I}(\reg{R/I})$.
\end{proof}

We now recover Theorem~\ref{thm:slp-char0-reg} with different techniques than those used in~\cite{HMNW}
and~\cite{BI}.  (\emph{Nota bene}: We have not assumed Theorem~\ref{thm:slp-char0-reg} up to this point; doing
so offers no benefit as our approach and desired results are different.)  In particular, by weakening the bound
in the preceding proof we can generalise from monomial ideals to homogeneous ideals.  See
Corollary~\ref{cor:p-failure-reg-sharpness} for a classification of when this bound is sharp.

\begin{theorem} 
    Let $I$ be a homogeneous artinian ideal in $R = K[x,y]$, where $K$ is a field of characteristic $p > 0$.
    Then $R/I$ has the strong Lefschetz property if $p > \reg{R/I}$.
\end{theorem}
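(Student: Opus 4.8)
The plan is to reduce the statement to the monomial case already settled by Theorem~\ref{thm:w-bound}, and then transport the conclusion back through the generic initial ideal. Let $J = \gin{I}$ denote the generic initial ideal of $I$ with respect to the reverse lexicographic order, recalling that in $R = K[x,y]$ this order coincides with the lexicographic order. By Proposition~\ref{pro:slp-gin}, $R/I$ has the strong Lefschetz property if and only if the monomial quotient $R/J$ does, so it suffices to prove that $R/J$ has the strong Lefschetz property.

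First I would record that $\reg{R/J} = \reg{R/I}$. This is automatic in the present framework: the generic initial ideal preserves the Hilbert function, and by the definition in Section~\ref{sec:width} the regularity of an artinian quotient is nothing more than the top degree in which its Hilbert function is nonzero. In particular $J$ is again artinian and $p > \reg{R/J}$.

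Next I would invoke the width classification. Since $R/J$ is a proper artinian quotient we have $J \neq R$, so Proposition~\ref{pro:classify-width} applies and yields the bound $w_{R/J}(d) \leq d+1$ in every degree $d$; in particular $w_{R/J}(\reg{R/J}) \leq \reg{R/J} + 1$. As $p$ and $\reg{R/J}$ are integers, the hypothesis $p > \reg{R/J}$ gives $p \geq \reg{R/J} + 1 \geq w_{R/J}(\reg{R/J})$. Theorem~\ref{thm:w-bound} then shows that $R/J$ has the strong Lefschetz property, and Proposition~\ref{pro:slp-gin} carries this back to $R/I$.

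I do not expect a genuine obstacle here once Theorem~\ref{thm:w-bound} is available; the substance of the argument is simply the elementary inequality $w(d) \leq d+1$ supplied by Proposition~\ref{pro:classify-width}, which trades the sharp monomial hypothesis $p \geq w_{R/J}(\reg{R/J})$ for the coarser regularity bound $p > \reg{R/J}$. The one step deserving attention is the reduction from homogeneous to monomial ideals: one must confirm that passing to $\gin{I}$ preserves both artinianness and regularity, which holds because it preserves the Hilbert function, and that the reverse lexicographic generic initial ideal is the appropriate one for detecting the strong Lefschetz property, which is exactly the content of Proposition~\ref{pro:slp-gin}.
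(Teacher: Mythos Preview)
Your proof is correct and follows essentially the same route as the paper: pass to $J = \gin{I}$, use Proposition~\ref{pro:classify-width} to bound $w_{R/J}(\reg{R/J}) \leq \reg{R/J}+1 = \reg{R/I}+1$, apply Theorem~\ref{thm:w-bound} to conclude $R/J$ has the strong Lefschetz property, and transfer back via Proposition~\ref{pro:slp-gin}. Your version is somewhat more expansive in justifying the preservation of regularity and the equivalence of the two term orders in two variables, but the argument is the same.
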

\begin{proof}
    Let $J = \gin{I}$ under the (reverse) lexicographic order.  Then by Theorem~\ref{thm:w-bound}, $R/J$ has the strong
    Lefschetz property if $p \geq w_{R/J}(\reg{R/J})$.  By Proposition~\ref{pro:classify-width} we have that
    $w_{R/J}(\reg{R/J}) \leq \reg{R/J} + 1$, and since the generic initial ideal preserves the Hilbert function
    we have $\reg{R/J} = \reg{R/I}$ as $J$ is artinian.  Hence $R/I$ has the strong Lefschetz property if
    $p > \reg{R/I}$ by Proposition~\ref{pro:slp-gin}.
\end{proof}

\begin{remark} \label{rem:reg-bound}
    The bound in the preceding theorem sometimes holds in higher codimension.  Let $I$ be a monomial complete
    intersection in $S = K[x_1, \ldots, x_n]$, i.e., $I = (x_1^{d_1}, \ldots, x_n^{d_n})$.  Then
    we have that $S/I$ has the strong Lefschetz property if $p > \reg{S/I}$ by~\cite[Theorem~3.6(ii)]{Co}.
    Moreover, this bound is sharp in some cases; in particular, if $p = \reg{S/I}$, then $S/I$ fails to have
    the strong Lefschetz property by~\cite[Theorem~3.6(i)]{Co}.  See Corollary~\ref{cor:p-failure-reg-sharpness} for
    a similar statement about the sharpness of the bound in the preceding corollary.

    On the other hand, the bound is not true in general, even as a bound for the failure of the weak Lefschetz property.
    Let $I = (x^{20}, y^{20}, z^{20}, x^3 y^8 z^{13})$ be an ideal of $S = K[x,y,z]$.  In this case, $\reg{S/I} = 50$, and
    $S/I$ has the weak Lefschetz property if and only if the characteristic of $K$ is \emph{not} one of the following primes:
    $2$, $3$, $5$, $7$, $11$, $17$, $19$, $23$, or $20554657$.  This example comes from~\cite{CN}, wherein the
    presence and absence of the weak Lefschetz property for monomial ideals in codimension three is considered.
\end{remark}

We now consider for which characteristics the strong Lefschetz property is absent.

\begin{lemma}\label{lem:w-failure}
    Let $I$ be a monomial artinian ideal in $R = K[x,y]$.  Suppose there exists a $j$
    such that $\indeg{I} \leq j \leq \reg{R/I}$, $w_{R/I}(j) \neq j+1 - h_{R/I}(j)$, and
    $w_{R/I}(j) - 1 = \charf{K}$ is prime.  Then $R/I$ fails to have the strong Lefschetz
    property.
\end{lemma}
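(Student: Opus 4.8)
The plan is to produce a single multiplication map $\times(x+y)^t \colon [R/I]_d \to [R/I]_{d+t}$ that fails to be a bijection; by Lemma~\ref{lem:slp-same-hf} this is enough to conclude that $R/I$ lacks the strong Lefschetz property. The natural target degree is $d+t = j$, so I would set $d := h_{R/I}(j) - 1$ and $t := j - h_{R/I}(j) + 1$, and then show that the matrix $N_{R/I}(d,j)$ is singular over $K$.

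First I would check that this is a legitimate choice of indices, namely that $t \geq 1$ and $h_{R/I}(d) = h_{R/I}(d+t) = d+1$. Since $j \geq \indeg{I}$ we have $[I]_j \neq 0$ and hence $h_{R/I}(j) \leq j$, which gives $t \geq 1$. By the shape of a two-variable Hilbert function described in Proposition~\ref{pro:classify-hf}, $h_{R/I}$ equals $i+1$ up to degree $\indeg{I}-1$ and is weakly decreasing thereafter, so $h_{R/I}(j) \leq \indeg{I}$; thus $d = h_{R/I}(j)-1 < \indeg{I}$ and therefore $h_{R/I}(d) = d+1$. By construction $d+1 = h_{R/I}(j) = h_{R/I}(d+t)$, so the hypotheses of Lemma~\ref{lem:square} and Corollary~\ref{cor:formula-1} are satisfied.

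Next I would analyse $\det N_{R/I}(d,j)$ through Lemma~\ref{lem:square}. The hypothesis $w_{R/I}(j) \neq j+1 - h_{R/I}(j)$ says, via Lemma~\ref{lem:lex-seg-dist}, that $I$ is not lexsegment in degree $j$; equivalently, by Corollary~\ref{cor:formula-1}, $|\det N_{R/I}(d,j)| \neq 1$, so we are in the case $r \leq s$ of Lemma~\ref{lem:square} and the largest factor of its closed form is exactly $w_{R/I}(j) - 1 = p$.

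The crux is showing that $p$ genuinely divides this determinant, and this is where I expect the only real subtlety to lie. Here I would invoke the sharper content of Lemma~\ref{lem:square}: the factor $p = w_{R/I}(j)-1$ appears exactly once, arising as the top factor of the single factorial $(t+s-r)! = p!$ in the numerator, while every other factor occurring in the products---the differences $b_{r+j}-b_{r+i}$, the remaining numerator factorials $(t+i)!$ with $i < s-r$, and all denominator factorials $(t+s-b_{r+i})!$ and $(b_{r+i}-r)!$---is built entirely from integers strictly smaller than $p$. Because $p$ is prime it divides none of these other contributions, so the $p$-adic valuation of the integer $\det N_{R/I}(d,j)$ is exactly one. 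In particular $\det N_{R/I}(d,j) \equiv 0 \pmod{p}$, so over a field of characteristic $p$ the map $\times(x+y)^t$ is singular and hence not a bijection, and Lemma~\ref{lem:slp-same-hf} then yields the failure of the strong Lefschetz property. The main obstacle is thus the divisibility bookkeeping: everything rests on combining the ``appears exactly once'' clause of Lemma~\ref{lem:square} with the primality of $p$, which is precisely what prevents the lone factor of $p$ in the numerator from being cancelled against the denominator.
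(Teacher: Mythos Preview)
Your proposal is correct and follows essentially the same route as the paper: set $d = h_{R/I}(j)-1$, $t = j-d$, and use Lemma~\ref{lem:square} together with the hypothesis that $I$ is not lexsegment in degree $j$ to see that the unique largest factor in the determinant formula is $w_{R/I}(j)-1 = p$. Your write-up is in fact more careful than the paper's, explicitly verifying $t\ge 1$, $h_{R/I}(d)=d+1$, and spelling out via the ``appears exactly once'' clause and the primality of $p$ why the $p$-adic valuation of $\det N_{R/I}(d,j)$ is exactly one; the paper leaves that last step implicit.
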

\begin{proof}
    Let $d = h_{R/I}(j) - 1$ and $t = j - d$, then $h_{R/I}(d) = d + 1 = h_{R/I}(d+t)$.
    By Lemma~\ref{lem:square}, since $w_{R/I}(d+t) > d+1 - h_{R/I}(d+t)$, we have that
    the largest factor of the formula giving the determinant of $N_{R/I}(d,d+t)$ is
    $w_{R/I}(d+t) - 1 = \charf{K}$.  This implies that the map
    $\times (x+y)^t: [R/I]_{d} \rightarrow [R/I]_{d+t}$ fails to have maximal rank,
    i.e., $R/I$ fails to have the strong Lefschetz property.
\end{proof}

Using this we classify exactly when the bound in Theorem~\ref{thm:w-bound} is sharp.

\begin{corollary}\label{cor:w-sharpness}
    Let $I$ be a monomial artinian ideal in $R = K[x,y]$, and suppose $p = w_{R/I}(\reg{R/I})-1$ is prime.
    Then $R/I$ fails to have the strong Lefschetz property in characteristic $p$ if and only if 
	$x^{\reg{R/I}}, y^{\reg{R/I}} \in I$.
\end{corollary}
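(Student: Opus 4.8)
The plan is to localise any possible failure of the strong Lefschetz property to a single multiplication map and then read the failure condition off the determinant formula of Lemma~\ref{lem:square}. Write $r = \reg{R/I}$, so that $p = w_{R/I}(r) - 1$ is prime by hypothesis. By Lemma~\ref{lem:slp-same-hf} it is enough to control the maps $\times(x+y)^t\colon [R/I]_d \to [R/I]_{d+t}$ with $h_{R/I}(d) = h_{R/I}(d+t) = d+1$, and by Lemma~\ref{lem:square} the largest integer factor occurring in $|\det N_{R/I}(d,d+t)|$ is $w_{R/I}(d+t) - 1$. Since the width function is strictly increasing on $\{\indeg{I}, \dots, r\}$ and is maximised at $r$ by Proposition~\ref{pro:classify-width}, we have $w_{R/I}(d+t) \le w_{R/I}(r) = p+1$, with equality exactly when $d+t = r$. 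Thus every factor of every such determinant is at most $p$, and the prime $p$ can appear as a factor only for the unique map into degree $r$; all other maps have maximal rank in characteristic $p$. So $R/I$ fails the strong Lefschetz property in characteristic $p$ if and only if $p \mid \det N_{R/I}(d,r)$, where $d = h_{R/I}(r) - 1$ and $t = r - d$.

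The second step is to decide this divisibility. By Corollary~\ref{cor:formula-1}, $|\det N_{R/I}(d,r)| = 1$ exactly when $I$ is lexsegment in degree $r$; and when $I$ is not lexsegment in degree $r$ the factor $w_{R/I}(r) - 1 = p$ occurs exactly once in the formula, so the prime $p$ divides the determinant and the map fails (this is also the content of Lemma~\ref{lem:w-failure} applied at $j = r$). Combining the two steps, $R/I$ fails the strong Lefschetz property in characteristic $p$ if and only if $I$ is not lexsegment in degree $r$, equivalently, by Lemma~\ref{lem:lex-seg-dist}, if and only if $w_{R/I}(r) \ne r + 1 - h_{R/I}(r)$.

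It then remains to identify this width condition with the stated membership $x^r, y^r \in I$. One implication is immediate: if $x^r$ and $y^r$ both lie in $I$, then $y^r$ and $x^r$ are the lexicographically smallest and largest degree-$r$ monomials in $I$, so $w_{R/I}(r) = r+1$ (and $p = r$); since $h_{R/I}(r) > 0$ some degree-$r$ monomial is absent, creating a gap, so $I$ is not lexsegment in degree $r$ and $R/I$ fails by the above. For the reverse implication I would work from the staircase presentation of $I$, using that $[I]_{r+1} = [R]_{r+1}$ because $r = \reg{R/I}$, to force the two corner monomials into $I$ whenever the determinant at degree $r$ is divisible by $p = w_{R/I}(r) - 1$. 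This last translation is the main obstacle: one must show that, under the primality of $w_{R/I}(r) - 1$, the configuration of $[I]_r$ producing the critical factor $p$ is governed by the membership of the two pure powers $x^r$ and $y^r$, and it is here that the interplay between the width at the top degree and the corner monomials must be handled with the greatest care.
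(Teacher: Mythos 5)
Your first two paragraphs are correct and in fact sharper than the paper's own argument: using Lemma~\ref{lem:square}, Corollary~\ref{cor:formula-1}, and the strict monotonicity of the width function from Proposition~\ref{pro:classify-width}, you correctly isolate the single critical map into degree $r = \reg{R/I}$ and establish that $R/I$ fails the strong Lefschetz property in characteristic $p = w_{R/I}(r)-1$ if and only if $I$ is not lexsegment in degree $r$, equivalently $w_{R/I}(r) \neq r+1-h_{R/I}(r)$. This is exactly the mechanism behind Lemma~\ref{lem:w-failure}, and your ``if'' direction ($x^r, y^r \in I$ forces $w_{R/I}(r)=r+1 > r+1-h_{R/I}(r)$) coincides with the paper's argument for that half.

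The step you flag as the main obstacle is a genuine gap, and it cannot be closed: ``$I$ is not lexsegment in degree $r$'' is strictly weaker than ``$x^{r}, y^{r} \in I$''. Take $I = (xy^3,\, x^3y,\, x^5,\, y^5)$. Its $h$-vector is $(1,2,3,4,3)$, so $r=4$; the only degree-$4$ monomials in $I$ are $xy^3$ and $x^3y$, so $w_{R/I}(4)=3$ and $p=2$ is prime; and $x^4, y^4 \notin I$. Yet $xy\cdot(x+y)^2 = x^3y + 2x^2y^2 + xy^3 \equiv 0$ in $R/I$ when $\charf{K}=2$, so $\times(x+y)^2 : [R/I]_2 \to [R/I]_4$, a map between two $3$-dimensional components, has a kernel and the strong Lefschetz property fails --- consistent with your criterion (here $|\det N_{R/I}(2,4)|=2$) but inconsistent with the stated equivalence. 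For comparison, the paper disposes of this direction with a one-line appeal to Theorem~\ref{thm:w-bound}; but that theorem only guarantees the strong Lefschetz property for $p \geq w_{R/I}(r)$ and so says nothing about the critical characteristic $p = w_{R/I}(r)-1$. Your determinant analysis is the honest route, and what it actually proves is that failure in characteristic $w_{R/I}(r)-1$ is equivalent to $I$ failing to be lexsegment in degree $r$ (equivalently, by Lemma~\ref{lem:lex-seg-dist}, to $w_{R/I}(r) \neq r+1-h_{R/I}(r)$), not to membership of the two pure powers; you should state and prove that version rather than attempt the translation you describe.
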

\begin{proof}
	Suppose $p = w_{R/I}(\reg{R/I})-1$ is prime.  If one of $x^{\reg{R/I}}$ and $y^{\reg{R/I}}$ is not in $I$,
    then $R/I$ can only fail to have the strong Lefschetz property in characteristics smaller than 
    $p = w_{R/I}(\reg{R/I})-1$ by Theorem~\ref{thm:w-bound}.

    On the other hand, suppose $x^{\reg{R/I}}, y^{\reg{R/I}} \in I$.  Then $w_{R/I}(\reg{R/I}) \neq \reg{R/I}+1 - h_{R/I}(\reg{R/I})$,
    and hence $R/I$ fails to have the strong Lefschetz property by Lemma~\ref{lem:w-failure}.
\end{proof}

As with Theorems~\ref{thm:w-bound} and~\ref{thm:slp-char0-reg}, the above pair of results
can be extended to homogeneous ideals, if we strengthen the restrictions on the ideals.  This
in turn classifies exactly when the bound in Theorem~\ref{thm:slp-char0-reg} is sharp.

\begin{corollary}\label{cor:p-failure-reg-sharpness}
    Let $I$ be a homogeneous artinian ideal in $R = K[x,y]$, where $K$ is a field of characteristic $p > 0$.
    If $p \leq \reg{R/I}$ and $x^p, y^p \in \gin{I}$ (under the lexicographic order), then
    $R/I$ fails to have the strong Lefschetz property.

	In particular, if $\reg{R/I}$ is prime and $p = \reg{R/I}$, then $R/I$ fails to have the strong Lefschetz
	property if and only if $x^p, y^p \in \gin{I}$ (under the lexicographic order).
\end{corollary}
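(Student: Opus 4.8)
The plan is to reduce everything to the monomial case by passing to the generic initial ideal. Set $J = \gin I$; since for $R = K[x,y]$ the lexicographic and reverse lexicographic orders coincide, Proposition~\ref{pro:slp-gin} gives that $R/I$ has the strong Lefschetz property if and only if $R/J$ does. Because the generic initial ideal preserves the Hilbert function and $J$ is artinian, $\reg{R/J} = \reg{R/I}$. From this point on I work entirely with the monomial ideal $J$, for which the width-function machinery applies.

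For the first assertion, assume $p \leq \reg{R/I}$ and $x^p, y^p \in J$. First I would compute the width in degree $p$: since $y^p$ is the lex-smallest and $x^p$ the lex-largest monomial of degree $p$, and both lie in $J$, the defining minimum and maximum are $0$ and $p$, so $w_{R/J}(p) = p+1$. Then I verify the three hypotheses of Lemma~\ref{lem:w-failure} with $j = p$: the bound $\indeg J \leq p$ holds because $x^p \in J$; the bound $p \leq \reg{R/J}$ is the hypothesis; by Lemma~\ref{lem:lex-seg-dist} the lexsegment defect in degree $p$ is $w_{R/J}(p) + h_{R/J}(p) - (p+1) = h_{R/J}(p)$, which is positive since $p \leq \reg{R/J}$ forces $h_{R/J}(p) > 0$, so indeed $w_{R/J}(p) \neq p+1 - h_{R/J}(p)$; and finally $w_{R/J}(p) - 1 = p = \charf K$ is prime (a positive field characteristic is always prime). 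Lemma~\ref{lem:w-failure} then yields that $R/J$, and hence $R/I$, fails to have the strong Lefschetz property.

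For the second assertion, take $\reg{R/I} = p$ prime. The backward implication is precisely the first assertion in the case $p = \reg{R/I}$. For the forward implication, assume $R/I$---equivalently $R/J$---fails to have the strong Lefschetz property. The crucial step is to pin down the width at the regularity: the contrapositive of Theorem~\ref{thm:w-bound} forces $w_{R/J}(\reg{R/J}) > p$, while Proposition~\ref{pro:classify-width} gives $w_{R/J}(\reg{R/J}) \leq \reg{R/J} + 1 = p+1$; together these squeeze the value to $w_{R/J}(\reg{R/J}) = p+1$, so $w_{R/J}(\reg{R/J}) - 1 = p$ is prime. Corollary~\ref{cor:w-sharpness} now applies to $J$, and since $R/J$ fails to have the strong Lefschetz property its stated equivalence yields $x^{\reg{R/J}}, y^{\reg{R/J}} \in J$, that is, $x^p, y^p \in \gin I$.

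I expect the crux to be exactly that squeezing argument in the forward implication: it is where the primality hypothesis $\reg{R/I} = p$ is indispensable, since it is what allows the lower bound from Theorem~\ref{thm:w-bound} and the upper bound from Proposition~\ref{pro:classify-width} to force $w_{R/J}(\reg{R/J}) = p+1$ and thereby legitimately invoke the sharpness criterion of Corollary~\ref{cor:w-sharpness}. The remaining pieces---the computation $w_{R/J}(p) = p+1$ and the identification of the lexsegment defect with $h_{R/J}(p)$ via Lemma~\ref{lem:lex-seg-dist}---are routine, as is the observation that $\charf K$ is automatically prime.
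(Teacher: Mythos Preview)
Your argument is correct and, for the first assertion, essentially identical to the paper's: pass to $J=\gin I$, read off $w_{R/J}(p)=p+1$ from $x^p,y^p\in J$, and invoke Lemma~\ref{lem:w-failure}. The paper's proof stops there and does not spell out the ``In particular'' clause; your squeezing argument for the forward direction---Theorem~\ref{thm:w-bound} forcing $w_{R/J}(\reg{R/J})>p$ and Proposition~\ref{pro:classify-width} capping it at $p+1$, then appealing to Corollary~\ref{cor:w-sharpness}---is exactly the reduction to the monomial sharpness result the paper intends, and fills in what was left implicit.
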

\begin{proof}
    Let $J = \gin{I}$ under the (reverse) lexicographic order.  If $x^p, y^p \in J$, then $w_{R/J}(p) = p+1$.
    Since $p \leq \reg{R/I} = \reg{R/J}$, then $w_{R/J}(p) \neq p+1 - h_{R/J}(p)$.  Hence, by
    Lemma~\ref{lem:w-failure}, $R/J$ fails to have the strong Lefschetz property, and so
    $R/I$ fails to have the strong Lefschetz property by Proposition~\ref{pro:slp-gin}.
\end{proof}

However, the bounds can be far from sharp in some cases.

\begin{example}\label{exa:sharpness}
    Ignoring lexsegment ideals (which always have the strong Lefschetz property by Theorem~\ref{thm:always-slp}),
    the bounds in Theorems~\ref{thm:w-bound} and~\ref{thm:slp-char0-reg}
    are far from sharp in some cases.  In particular, consider the ideal $I_n = (x^{2^n}, y^2)$,
    where $n \geq 1$.  By~\cite[Lemma~4.2(i)]{Co}, we have that $I_n$ has the strong Lefschetz
    property if and only if the characteristic of $K$ divides $2^n$, i.e., $\charf{K} = 2$.
    However, $\reg{R/I_n} = 2^n$, so the regularity bound is sharp if and only if $n = 1$.
    Moreover, $w_{R/I_n}(\reg{R/I_n}) = 2^n+1$, so the width bound is never sharp for this family.
\end{example}

\begin{remark}\label{rem:max-gen-deg}
    Corollary~\ref{cor:p-failure-reg-sharpness} further implies that the maximal degree of a minimal
    generator is not a good bound.  For example,
    let $I_p = (x^{(p+1)/2}, y^{(p+3)/2})$, where $p$ is an odd prime.  Then $\reg{R/I_p} = p$
    and $x^p, y^p \in I_p$, so by Corollary~\ref{cor:p-failure-reg-sharpness} $R/I_p$ fails to
    have the strong Lefschetz property in characteristic $p$.  Notice that the maximal
    generating degree of a minimal generator of $I_p$ is $\frac{p+3}{2}$, which is less
    than $p$ for $p > 3$.
\end{remark}

\subsection{Forcing the presence of the strong Lefschetz property}\label{sub:forcing}~

Let $S = K[x_1, \ldots, x_n]$ be a polynomial ring, where $K$ is a field of characteristic zero.
Then Migliore and Zanello~\cite[Theorem~5]{MZ} classified the Hilbert functions of artinian quotients of
$S$ that force the weak Lefschetz property to hold.  Similarly, Zanello and Zylinski~\cite[Corollary~3.3]{ZZ}
proved all artinian quotients of $S$ with a given Hilbert function have the strong Lefschetz property
if and only if the initial lexsegment ideal with the given Hilbert function has the strong Lefschetz property.

In some cases, the strong Lefschetz property always holds, regardless of characteristic.  The following theorem
classifies the monomial ideals with this property.  (See Proposition~\ref{pro:slp-small-nonmono} for an infinite
family of non-monomial ideals with this property.)

\begin{theorem}\label{thm:always-slp}
    Let $I$ be a monomial artinian ideal in $R = K[x,y]$.  Then $R/I$ always has the strong Lefschetz
    property, regardless of the characteristic of $K$, if and only if $I$ is a lexsegment ideal.
\end{theorem}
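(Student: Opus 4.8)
The plan is to prove both implications by reducing, via Lemma~\ref{lem:slp-same-hf}, to the invertibility of the square matrices $N_{R/I}(d,d+t)$ and then reading off their determinants through Corollary~\ref{cor:formula-1}. The point is that these matrices govern exactly the maps between equidimensional components, and Corollary~\ref{cor:formula-1} tells us precisely when such a determinant is a unit.

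For the forward direction, suppose $I$ is lexsegment. First I would fix any $d \geq 0$ and $t \geq 1$ with $h_{R/I}(d) = h_{R/I}(d+t) = d+1$. Since $I$ is lexsegment in every degree, it is in particular lexsegment in degree $d+t$, so Corollary~\ref{cor:formula-1} gives $|\det N_{R/I}(d,d+t)| = 1$. A determinant equal to $\pm 1$ is nonzero in every characteristic, so the map $\times(x+y)^t \colon [R/I]_d \to [R/I]_{d+t}$ is a bijection regardless of $\charf K$. By Lemma~\ref{lem:slp-same-hf} this forces $R/I$ to have the strong Lefschetz property in every characteristic.

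For the converse I would argue the contrapositive. Assume $I$ is not lexsegment. Since $[I]_j = 0$ for $j < \indeg{I}$ and $[I]_j = [R]_j$ for $j > \reg{R/I}$ (as $R/I$ is artinian), $I$ is automatically lexsegment in those degrees, so there is a degree $j$ with $\indeg{I} \leq j \leq \reg{R/I}$ in which $I$ fails to be lexsegment; in particular $[I]_j \neq 0$. Following the reduction in Lemma~\ref{lem:w-failure}, I would set $d = h_{R/I}(j) - 1$ and $t = j - d$, so that $h_{R/I}(d) = d+1 = h_{R/I}(d+t)$ with $d+t = j$ and $t \geq 1$; the matrix $N_{R/I}(d,d+t)$ is then square. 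Because $I$ is not lexsegment in degree $j = d+t$, Corollary~\ref{cor:formula-1} yields $|\det N_{R/I}(d,d+t)| \neq 1$, and since this determinant is a positive integer it must exceed $1$.

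The key move is then to extract a prime factor $p$ of $|\det N_{R/I}(d,d+t)|$. Over any infinite field of characteristic $p$ the determinant vanishes, so the square map $\times(x+y)^t$ is singular, hence not of maximal rank, and Lemma~\ref{lem:slp-same-hf} shows that $R/I$ fails the strong Lefschetz property in characteristic $p$; thus it does not have the property in every characteristic. The one step I would be most careful about is verifying that the chosen $d$ and $t$ really produce equidimensional components landing exactly at the non-lexsegment degree $j$, but this is precisely the Hilbert-function bookkeeping already carried out in the proof of Lemma~\ref{lem:w-failure} through Proposition~\ref{pro:classify-hf}, so no new work is needed. Note in particular that, unlike Lemma~\ref{lem:w-failure}, I do not require $w_{R/I}(j)-1$ itself to be prime: any prime divisor of the determinant suffices to exhibit a failing characteristic, which is what makes the converse go through for arbitrary non-lexsegment $I$.
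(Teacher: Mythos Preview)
Your proposal is correct and follows essentially the same route as the paper: both directions are reduced via Lemma~\ref{lem:slp-same-hf} to the invertibility of the matrices $N_{R/I}(d,d+t)$, and Corollary~\ref{cor:formula-1} is then invoked in exactly the way you describe. Your extraction of a prime divisor of $|\det N_{R/I}(d,d+t)|$ simply spells out what the paper leaves implicit in the phrase ``does not always have maximal rank.''
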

\begin{proof}
    Suppose $I$ is not a lexsegment ideal.  Then there is a degree $j \geq \indeg{I}$ such that $I$ is not
    lexsegment in degree $j$.  Let $d = h_{R/I}(j) - 1$ and $t = j - d$, then
    $h_{R/I}(d) = d + 1 = h_{R/I}(d+t)$.  By Corollary~\ref{cor:formula-1} we see that $N_{R/I}(d,d+t)$ does not
    always have maximal rank, hence $R/I$ does not always have the strong Lefschetz property.

    Suppose now that $I$ is a lexsegment ideal.  Then by Corollary~\ref{cor:formula-1} the matrices
    $N_{R/I}(d,d+t)$, where $h(d) = h(d+t) = d+1$, always have maximal rank.  Therefore, $R/I$ always has
    the strong Lefschetz property, regardless of the characteristic of $K$, by Lemma~\ref{lem:slp-same-hf}.
\end{proof}

This implies that no Hilbert function (see Proposition~\ref{pro:classify-hf}) or width function (see Proposition~\ref{pro:classify-width})
can force the strong Lefschetz property to be absent in \emph{some} characteristic for all ideals with the given Hilbert or
width function.  On the other hand, we can describe a large class of Hilbert functions and width functions that force the strong
Lefschetz property to be present.

Using Lemma~\ref{lem:h-force-lexsegment}, we classify the Hilbert functions that force monomial ideals to
\emph{always} have the strong Lefschetz property, regardless of characteristic.

\begin{proposition}\label{pro:h-force-mono-slp}
    Suppose $h: \NN_0 \to \NN_0$ is the Hilbert function of some homogeneous artinian quotient of $R = K[x,y]$.
    Then every monomial quotient of $R$ such that $h_{R/I} = h$ has the strong Lefschetz property, regardless of the
    characteristic of $K$, if and only if for every nonnegative integer $d$ such that $h(d) > h(d+1)$ then $h(d+1) = h(d+2)$.
\end{proposition}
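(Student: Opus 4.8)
The plan is to recognise that the condition imposed on $h$ in this proposition is \emph{identical} to the one appearing in Lemma~\ref{lem:h-force-lexsegment}, so the statement should follow by threading that lemma together with Theorem~\ref{thm:always-slp}. Concretely, I would first invoke Theorem~\ref{thm:always-slp}: a monomial artinian quotient $R/I$ always has the strong Lefschetz property, regardless of the characteristic of $K$, precisely when $I$ is a lexsegment ideal. Consequently, the assertion ``every monomial quotient $R/I$ with $h_{R/I} = h$ always has the strong Lefschetz property'' is logically equivalent to the assertion ``every monomial quotient $R/I$ with $h_{R/I} = h$ is lexsegment.''

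Next I would apply Lemma~\ref{lem:h-force-lexsegment} to this latter statement. That lemma asserts exactly that every monomial quotient $R/I$ with $h_{R/I} = h$ is lexsegment if and only if, for every nonnegative integer $d$ with $h(d) > h(d+1)$, one has $h(d+1) = h(d+2)$. Chaining the two equivalences then yields the desired biconditional: every monomial quotient of $R$ with Hilbert function $h$ always has the strong Lefschetz property if and only if $h$ satisfies the stated step condition.

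Since both ingredients are already established, no genuine obstacle remains; the only point requiring care is confirming that the quantifier ``every monomial quotient with $h_{R/I} = h$'' is handled uniformly across both prior results, so that the equivalences compose without a gap. In particular, Theorem~\ref{thm:always-slp} converts the characteristic-free Lefschetz condition \emph{ideal by ideal} into the lexsegment condition, while Lemma~\ref{lem:h-force-lexsegment} governs the entire family of monomial ideals sharing the fixed Hilbert function $h$; because both operate at exactly this level of generality, the composition is immediate and the proof reduces to quoting the two results in sequence.
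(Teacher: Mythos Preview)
Your proposal is correct and matches the paper's own proof exactly: the paper simply writes ``Combine Lemma~\ref{lem:h-force-lexsegment} and Theorem~\ref{thm:always-slp},'' which is precisely the chain of equivalences you describe. No gap remains, and your added remark about the quantifiers composing cleanly is a fair elaboration of why the combination is immediate.
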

\begin{proof}
    Combine Lemma~\ref{lem:h-force-lexsegment} and Theorem~\ref{thm:always-slp}.
\end{proof}

Hence, we can force homogeneous ideals with these Hilbert functions to have the strong Lefschetz property.

\begin{corollary}\label{thm:h-force-slp}
    Suppose $h: \NN_0 \to \NN_0$ is the Hilbert function of some homogeneous artinian quotient of $R = K[x,y]$.
    If for every nonnegative integer $d$ such that $h(d) > h(d+1)$ then $h(d+1) = h(d+2)$,
    then every homogeneous artinian quotient $R/I$ with $h_{R/I} = h$ has the strong Lefschetz property.
\end{corollary}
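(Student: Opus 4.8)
The plan is to leverage the generic initial ideal in order to reduce the statement to the monomial case that has already been settled in Proposition~\ref{pro:h-force-mono-slp}. Given any homogeneous artinian quotient $R/I$ with $h_{R/I} = h$, I would set $J = \gin{I}$ with respect to the reverse lexicographic order, which (as noted earlier in the excerpt) coincides with the lexicographic order since $R = K[x,y]$ has only two variables. The ideal $J$ is a monomial ideal, and because passing to the generic initial ideal preserves the Hilbert function, we have $h_{R/J} = h$; in particular $R/J$ is again artinian with the \emph{same} Hilbert function as $R/I$.

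Next, since $h$ satisfies the stated hypothesis, namely that $h(d) > h(d+1)$ forces $h(d+1) = h(d+2)$, Proposition~\ref{pro:h-force-mono-slp} applies directly to the monomial quotient $R/J$ and guarantees that $R/J$ has the strong Lefschetz property, regardless of the characteristic of $K$. Finally, Proposition~\ref{pro:slp-gin} transfers the strong Lefschetz property back from $R/J$ to $R/I$: an artinian quotient has the strong Lefschetz property if and only if the quotient by its reverse-lexicographic generic initial ideal does. Chaining these two facts yields that $R/I$ has the strong Lefschetz property, as desired.

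There is essentially no hard step here; the genuine work has all been carried out in the preceding monomial results and in the general generic-initial-ideal machinery, so I expect the entire argument to amount to a two- or three-sentence corollary. The only points requiring care are bookkeeping ones: confirming that $J$ is artinian (which follows since the gin preserves the Hilbert function and $R/I$ is artinian, so $\reg{R/J} = \reg{R/I}$), and observing that the lexicographic and reverse lexicographic orders agree in two variables so that the hypotheses of Proposition~\ref{pro:slp-gin} and Proposition~\ref{pro:h-force-mono-slp} align without any further adjustment.
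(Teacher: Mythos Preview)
Your proposal is correct and matches the paper's proof essentially line for line: pass to $\gin{I}$, use that the Hilbert function is preserved, apply Proposition~\ref{pro:h-force-mono-slp} to the monomial quotient $R/\gin{I}$, and then invoke Proposition~\ref{pro:slp-gin} to transfer the strong Lefschetz property back to $R/I$.
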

\begin{proof}
    Let $R/I$ be some homogeneous artinian quotient with $h_{R/I} = h$.  Since $\gin{I}$ preserves the
    Hilbert function, we have that $h_{R/\gin{I}} = h$.  By Proposition~\ref{pro:h-force-mono-slp}, $R/\gin{I}$
    has the strong Lefschetz property, and so $R/I$ has the strong Lefschetz property by Proposition~\ref{pro:slp-gin}.
\end{proof}

\begin{remark}\label{rem:switching-fields}
    We must be careful with the ring in which we consider an ideal to be.  For example, the ideal $I = (x^2 + y^2, x^3 + y^3)$
    is in $R = K[x,y]$, regardless of the field $K$.  However, $I$ is not artinian in a field of characteristic
    two.  Indeed, if $\charf{K} = 2$, then $(x,y) = (1,1)$ is a non-trivial common zero of the generators of $I$.  However,
    in all other characteristics $R/I$ is artinian and has $h$-vector $(1,2,2,1)$.

    Suppose $\charf{K} \neq 2$.  The reduced Gr\"obner basis for $I$ is $(x^2+y^2, xy^2-y^3, y^4)$, and so
    the initial ideal of $I$ is $\inid{I} = (x^2, xy^2, y^4)$.  Notice that $\inid{I}$ is lexsegment, and so always
    has the strong Lefschetz property by Theorem~\ref{thm:always-slp}.  Using~\cite[Proposition~2.9]{Wi}, the latter
    implies that $R/I$ also has the strong Lefschetz property, if $\charf{K} \neq 2$.
\end{remark}

Moreover, we classify the width functions that force monomial ideals to \emph{always} have the strong Lefschetz property,
regardless of characteristic, using Lemma~\ref{lem:w-force-lexsegment}.

\begin{proposition}\label{pro:w-force-mono-slp}
    Suppose $w: \NN_0 \to \NN_0$ is the width function of some monomial artinian quotient of $R = K[x,y]$.
    Then every monomial artinian quotient $R/I$ such that $w_{R/I} = w$ has the strong Lefschetz property, regardless of the
    characteristic of $K$, if and only if for every nonnegative integer $d$ we have $0 \leq w(d+1) - w(d) \leq 2$.
\end{proposition}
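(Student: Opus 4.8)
The plan is to derive this directly by combining Lemma~\ref{lem:w-force-lexsegment} with Theorem~\ref{thm:always-slp}, mirroring the one-line argument used for Proposition~\ref{pro:h-force-mono-slp}. The only genuine content is the bookkeeping of the universal quantifier over the family of all monomial quotients sharing the width function $w$, so I would be careful to state each biconditional with its quantifier made explicit.

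First I would record what Lemma~\ref{lem:w-force-lexsegment} supplies. Since $w$ is assumed to be the width function of some monomial artinian quotient of $R$, that lemma gives that the numerical condition $0 \leq w(d+1) - w(d) \leq 2$ for all $d \geq 0$ is \emph{equivalent} to the assertion that every monomial quotient $R/I$ with $w_{R/I} = w$ is lexsegment. Next I would invoke Theorem~\ref{thm:always-slp} ideal by ideal: for each monomial artinian ideal $I$, the quotient $R/I$ always has the strong Lefschetz property, regardless of the characteristic of $K$, if and only if $I$ is lexsegment. Because this biconditional holds for every monomial artinian ideal without further restriction, I can quantify it over the subfamily of ideals with $w_{R/I} = w$, so that ``every such $R/I$ always has the strong Lefschetz property'' is equivalent to ``every such $R/I$ is lexsegment.''

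Chaining the two equivalences then closes the argument: the condition on $w$ holds exactly when every monomial quotient with width function $w$ is lexsegment, which in turn holds exactly when every such quotient always has the strong Lefschetz property. The main (and essentially only) obstacle is the quantifier management---ensuring that the per-ideal biconditional of Theorem~\ref{thm:always-slp} is lifted cleanly to a universal statement over the correct family---but this is immediate because Theorem~\ref{thm:always-slp} imposes no hypothesis on the ideal beyond being monomial and artinian, which every member of the family satisfies.
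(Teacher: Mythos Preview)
Your proposal is correct and is precisely the paper's approach: the paper's proof reads in full ``Combine Lemma~\ref{lem:w-force-lexsegment} and Theorem~\ref{thm:always-slp},'' and your write-up simply unpacks that combination with explicit quantifier bookkeeping.
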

\begin{proof}
    Combine Lemma~\ref{lem:w-force-lexsegment} and Theorem~\ref{thm:always-slp}.
\end{proof}

Hence, we can force homogeneous ideals with these width functions to have the strong Lefschetz property.

\begin{corollary}\label{thm:w-force-slp}
    Suppose $w: \NN_0 \to \NN_0$ is the width function of some homogeneous artinian quotient of $R = K[x,y]$.
    If for every nonnegative integer $d$ we have $0 \leq w(d+1) - w(d) \leq 2$,
    then every homogeneous artinian quotient $R/I$ with $w_{R/\gin{I}} = w$ has the strong Lefschetz property.
\end{corollary}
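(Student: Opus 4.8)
The plan is to reduce to the monomial case via the generic initial ideal, exactly as in the proof of Corollary~\ref{thm:h-force-slp}. First I would take an arbitrary homogeneous artinian quotient $R/I$ with $w_{R/\gin{I}} = w$ and set $J = \gin{I}$ under the (reverse) lexicographic order, which coincides with the lexicographic order since $R$ has only two variables. The key observation is that $J$ is a monomial ideal, and because the generic initial ideal preserves the Hilbert function and $I$ is artinian, $R/J$ is a monomial artinian quotient. Hence the width function $w_{R/J}$ is defined, and by hypothesis it equals $w$.

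Next I would invoke Proposition~\ref{pro:w-force-mono-slp}. Since $w = w_{R/J}$ satisfies $0 \leq w(d+1) - w(d) \leq 2$ for every nonnegative integer $d$, that proposition guarantees that the monomial quotient $R/J$ has the strong Lefschetz property, regardless of the characteristic of $K$. I would then transfer this back to $I$ using Proposition~\ref{pro:slp-gin}, which asserts that $R/I$ has the strong Lefschetz property if and only if $R/\gin{I} = R/J$ does; combining the two yields the claim.

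I do not expect a serious obstacle here, as the argument is a direct gin-reduction mirroring the Hilbert-function analogue in Corollary~\ref{thm:h-force-slp}, with the width function playing the role of the Hilbert function. The only point requiring a moment of care is that the hypothesis is stated for the gin, namely $w_{R/\gin{I}} = w$ rather than a width function of $I$ itself --- which is natural, since the width function is defined only for monomial ideals --- so that Proposition~\ref{pro:w-force-mono-slp} applies to $R/J$ directly.
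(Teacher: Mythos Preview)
Your proposal is correct and follows exactly the same gin-reduction argument as the paper: pass to $J=\gin I$, apply the monomial classification, and pull back via Proposition~\ref{pro:slp-gin}. In fact your citation is cleaner than the paper's, which (apparently by a typo) invokes Proposition~\ref{pro:h-force-mono-slp} rather than the width-function analogue Proposition~\ref{pro:w-force-mono-slp} that the hypothesis actually matches.
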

\begin{proof}
    Let $R/I$ be some homogeneous artinian quotient with $w_{R/\gin{I}} = w$.  By Proposition~\ref{pro:h-force-mono-slp},
    $R/\gin{I}$ has the strong Lefschetz property, and so $R/I$ has the strong Lefschetz property by Proposition~\ref{pro:slp-gin}.
\end{proof}

\section{Observations}\label{sec:obs}

We close with some observations and connections.

\subsection{Complete intersections}\label{sub:ci}~

Let $I = (f_1, \ldots, f_n)$ be a homogeneous ideal in $S = K[x_1,\ldots,x_n]$.  Then $I$ is said to be a
\emph{complete intersection} of type $(\deg{f_1}, \ldots, \deg{f_n})$ if the generators of $I$ form a regular
sequence in $S$.  This is equivalent to $S/I$ being artinian.  We considered the presence of the strong
Lefschetz property in positive characteristic for \emph{monomial} complete intersections in~\cite{Co}.  Here we
show that the two of the results therein do not hold for non-monomial ideals.

The following lemma classifies the presence of the strong Lefschetz property in positive characteristic for
monomial complete intersections of type $(a, b)$, where $2 \leq a \leq 3$ and $a \leq b$.

\begin{lemma}{\cite[Lemma~4.2]{Co}} \label{lem:slp-small-mono}
    Let $R = K[x,y]$ and $p$ be the characteristic of $K$.  Then the following statements both hold.
    \begin{enumerate}
        \item $R/(x^a,y^2)$, for $a \geq 2$, has the strong Lefschetz property if and only if $p$ does not divide $a$.
        \item $R/(x^a,y^3)$, for $a \geq 3$, has the strong Lefschetz property if and only if $p = 2$ and $a\equiv 2 \pmod{4}$ or
            $p \neq 2$ and $a$ is not equivalent to $-1$, $0$, or $1$ modulo $p$
    \end{enumerate}
\end{lemma}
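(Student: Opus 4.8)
The plan is to reduce everything to a handful of explicit rank computations and then translate the resulting divisibility conditions into the stated congruences. Since both ideals are monomial, Proposition~\ref{pro:mono} lets me test the strong Lefschetz property by asking whether $x+y$ is a strong Lefschetz element, and Lemma~\ref{lem:slp-same-hf} reduces this to checking that $\times(x+y)^t\colon [R/I]_d \to [R/I]_{d+t}$ is a bijection for every pair with $h(d)=h(d+t)=d+1$. So I would first record the Hilbert functions: $R/(x^a,y^2)$ has $h$-vector $(1,2,\dots,2,1)$ with $\reg{R/I}=a$, and $R/(x^a,y^3)$ has $h$-vector $(1,2,3,\dots,3,2,1)$ with $\reg{R/I}=a+1$. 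In both cases the full components (those with $h(d)=d+1$) are exactly the degrees $d<\indeg{I}$, so the relevant pairs are short to list, and I would use the lexsegment structure together with Corollary~\ref{cor:formula-1} to discard the maps whose matrix has unit determinant.

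For part (i), the full components of $R/(x^a,y^2)$ are degrees $0$ and $1$, so the only pairs are $(0,a)$ and $(1,d+t)$ with $2\le d+t\le a-1$. Since $(x^a,y^2)$ is lexsegment in each degree from $1$ to $a-1$, Corollary~\ref{cor:formula-1} shows every $(1,\cdot)$ map has $|\det N_{R/I}|=1$ and is thus always a bijection. The single remaining map $\times(x+y)^a\colon [R/I]_0\to[R/I]_a$ sends $1$ to the reduction of $(x+y)^a$ modulo $(x^a,y^2)$, which is $a\,x^{a-1}y$ (equivalently, apply Lemma~\ref{lem:square}); as $[R/I]_a$ is one-dimensional this is a bijection precisely when $a\ne 0$ in $K$, i.e.\ when $p\nmid a$. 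This yields (i).

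For part (ii) the full components of $R/(x^a,y^3)$ are degrees $0,1,2$, and $I$ is lexsegment in degrees $2,\dots,a-1$ but not in degrees $a$ or $a+1$. By Corollary~\ref{cor:formula-1} every pair landing in a degree at most $a-1$ is automatically bijective, so only the two pairs $(1,a)$ and $(0,a+1)$ can obstruct the property. Feeding each into the closed form of Lemma~\ref{lem:square}---using that $[R/I]_a$ is spanned by $x^{a-2}y^2,x^{a-1}y$ and $[R/I]_{a+1}$ by $x^{a-1}y^2$---I expect to obtain $|\det N_{R/I}(1,a)|=\binom{a}{2}$ and $|\det N_{R/I}(0,a+1)|=\binom{a+1}{2}$. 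Hence $R/(x^a,y^3)$ has the strong Lefschetz property if and only if $p$ divides neither $\binom{a}{2}$ nor $\binom{a+1}{2}$.

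It then remains to rephrase the condition $p\nmid\binom{a}{2}\binom{a+1}{2}$ as the stated congruences, and this bookkeeping is the only delicate point. When $p$ is odd, $2$ is a unit, so the condition reads $p\nmid a(a-1)(a+1)$, that is $a\not\equiv -1,0,1\pmod p$. When $p=2$ the factor of $2$ in each denominator interacts with the numerator, so I would compute the $2$-adic valuations of the consecutive products $a(a-1)$ and $a(a+1)$: each binomial is odd exactly when the unique even member of its product is $\equiv 2\pmod 4$, and intersecting the two resulting conditions leaves precisely $a\equiv 2\pmod 4$. Everything upstream is forced by the lexsegment structure and the determinant formula, so I anticipate that organising this final $p=2$ casework cleanly will be the main obstacle.
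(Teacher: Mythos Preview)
Your argument is correct. The reduction via Lemma~\ref{lem:slp-same-hf} and Corollary~\ref{cor:formula-1} leaves exactly the pairs you identify, and the two determinants you ``expect'' do come out to $\binom{a}{2}$ and $\binom{a+1}{2}$: for $(1,a)$ one has $b_0=a-2$, $b_1=a-1$, $t=a-1$, so $\det N_{R/I}(1,a)=(a-1)^2-\binom{a-1}{2}=\binom{a}{2}$, and for $(0,a+1)$ the single entry is $\binom{a+1}{a-1}=\binom{a+1}{2}$. Your $p=2$ casework is also right: $\binom{a}{2}$ is odd iff $a\equiv 2,3\pmod 4$ and $\binom{a+1}{2}$ is odd iff $a\equiv 1,2\pmod 4$, whose intersection is $a\equiv 2\pmod 4$.

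As for comparison: the paper does not prove this lemma at all---it is quoted verbatim from \cite[Lemma~4.2]{Co} and used only as input to the discussion in Section~\ref{sub:ci}. So there is no in-paper proof to match against. What you have done, in effect, is give an alternative proof internal to the present paper's machinery (Lemma~\ref{lem:slp-same-hf}, Lemma~\ref{lem:square}, Corollary~\ref{cor:formula-1}), whereas the original proof in \cite{Co} works with monomial complete intersections in arbitrary codimension and uses a different determinant analysis. Your route is more economical here because the lexsegment structure of $(x^a,y^b)$ in the intermediate degrees lets Corollary~\ref{cor:formula-1} dispose of all but one or two maps at once; the price is that it is specific to codimension two, which is of course all the lemma claims.
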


This implies that every monomial complete intersection of type $(a, b)$, where $2 \leq a \leq 3$ and $a \leq b$,
fails to have the strong Lefschetz property for \emph{some} positive characteristic.  (Furthermore, by Theorem~\ref{thm:always-slp}
we see that the only monomial complete intersections in $R$ that\emph{always} have the strong Lefschetz property
are of type $(a, 1)$, where $a \geq 1$.)  This is not true for arbitrary complete intersections of the same types.
Indeed, the following result shows that there exist non-monomial complete intersections of type $(a, b)$, where
$2 \leq a \leq 3$ and $a \leq b$, that \emph{always} have the strong Lefschetz property.

\begin{proposition}\label{pro:slp-small-nonmono}
    Let $R = K[x,y]$.  Then the following non-monomial homogeneous artinian ideals always have the strong Lefschetz
    property, regardless of the characteristic of $K$.
    \begin{enumerate}
    \item $(x^2, xy^{b-1} + y^b)$, where $b \geq 2$, and
    \item $(x^3, x^2y^{b-2} + y^b)$, where $b \geq 3$.
    \end{enumerate}
\end{proposition}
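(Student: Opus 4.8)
The plan is to reduce each family to the monomial case via initial ideals, exactly as in Remark~\ref{rem:switching-fields}. For each ideal $I$ I would first compute a Gr\"obner basis with respect to the lexicographic order (which coincides with the reverse lexicographic order since $n=2$), read off $\inid(I)$, and verify that it is lexsegment; the conclusion then follows by combining Theorem~\ref{thm:always-slp} with the lifting result \cite[Proposition~2.9]{Wi}. Concretely, in case~(i) the single S-polynomial reduction of $x^2$ and $xy^{b-1}+y^b$ produces $y^{b+1}$, and Buchberger's criterion confirms that $\{x^2,\,xy^{b-1}+y^b,\,y^{b+1}\}$ is a Gr\"obner basis, so $\inid(I)=(x^2,xy^{b-1},y^{b+1})$. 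In case~(ii) two successive reductions yield first $xy^{b}$ and then $y^{b+2}$, giving $\inid(I)=(x^3,x^2y^{b-2},xy^{b},y^{b+2})$.

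Next I would check that these initial ideals are lexsegment. In each degree $d$ one computes directly that the monomials absent from $\inid(I)$ are precisely the lexicographically smallest ones: in case~(i) the only candidates outside the ideal are $y^d$ and $xy^{d-1}$ (present exactly for $d\le b$ and $d\le b-1$ respectively), and in case~(ii) the candidates are $y^d,xy^{d-1},x^2y^{d-2}$, which drop out of the complement in increasing lex order as $d$ grows. Thus in every degree the monomials lying in $\inid(I)$ form a lexicographic top segment, so the lexsegment defect of Lemma~\ref{lem:lex-seg-dist} vanishes and $\inid(I)$ is lexsegment.

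The point requiring the most care, and the one I expect to be the main obstacle, is \emph{characteristic-independence}. Every coefficient appearing in the S-polynomial reductions above is $\pm 1$, so the Gr\"obner bases, and hence the initial ideals, are the same in every characteristic; in particular $R/I$ is artinian for all $p$, since $x^2,y^{b+1}\in\inid(I)$ in case~(i) and $x^3,y^{b+2}\in\inid(I)$ in case~(ii). (Equivalently, $xy^{b-1}+y^b=y^{b-1}(x+y)$ and $x^2y^{b-2}+y^b=y^{b-2}(x^2+y^2)$ vanish together with $x^2$, resp.\ $x^3$, only at the origin, in any characteristic.) This is exactly the behaviour that \emph{fails} for an ideal such as $(x^2+y^2,x^3+y^3)$ in characteristic two (cf.\ Remark~\ref{rem:switching-fields}), so the $\pm 1$ coefficients must be verified rather than taken for granted.

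With characteristic-independence established, Theorem~\ref{thm:always-slp} shows that each lexsegment ideal $\inid(I)$ always has the strong Lefschetz property, and \cite[Proposition~2.9]{Wi} lifts this property from $\inid(I)$ to $R/I$ in every characteristic. The remaining work is then entirely routine: once the uniform-in-$p$ degeneration to the monomial initial ideal is confirmed, the statement is a direct application of the machinery already developed in Section~\ref{sec:slp}.
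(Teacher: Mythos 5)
Your proposal is correct and follows essentially the same route as the paper: exhibit the (characteristic-independent) Gr\"obner basis, observe that $\inid(I)$ is an artinian lexsegment ideal, and conclude via Theorem~\ref{thm:always-slp} together with \cite[Proposition~2.9]{Wi}. The only difference is that you spell out the Buchberger computation and the $\pm 1$ coefficient check that the paper dismisses as ``clear,'' which is a reasonable amount of extra care rather than a new idea.
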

\begin{proof}
    Suppose that $I = (x^2, xy^{b-1} + y^b)$, where $b \geq 2$.  Let $G = \{x^2, xy^{b-1} + y^b, y^{b+1}\}$.
    Clearly $G$ is a reduced Gr\"obner basis for $I$, regardless of the characteristic of $K$.
    So $\inid{I} = (x^2, xy^{b-1}, y^{b+1})$.

    On the other hand, suppose $I = (x^3, x^2y^{b-2} + y^b)$, where $b \geq 3$.  Let $G = \{x^3, x^2y^{b-2} + y^b, xy^b, y^{b+2}\}$.
    Clearly $G$ is a reduced Gr\"obner basis for $I$, regardless of the characteristic of $K$.
    So $\inid{I} = (x^3, x^2y^{b-2}, xy^b, y^{b+2})$.

    In either case, $\inid{I}$ is an artinian lexsegment ideal, and thus $R/\inid{I}$ always has the strong
    Lefschetz property by Theorem~\ref{thm:always-slp}.  Thus using~\cite[Proposition~2.9]{Wi}, we see that $R/I$
    always has the strong Lefschetz property.
\end{proof}

The following theorem characterises the presence of the strong Lefschetz property in positive characteristic
for monomial complete intersections of type $(d,d)$.

\begin{theorem}{\cite[Theorem~4.9]{Co}} \label{thm:slp-dd}
    Let $R = K[x,y]$, where $p$ is the characteristic of $K$, and $I_d = (x^d, y^d)$, where $d \geq 2$.  Then $R/I_d$
    has the strong Lefschetz property if and only if $p = 0$ or $2d-2 < p^s$, where $s$ is the largest integer such
    that $p^{s-1}$ divides $(2d-1)(2d+1)$.
\end{theorem}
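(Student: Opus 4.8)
The plan is to reduce the strong Lefschetz property to the nonvanishing modulo $p$ of a short explicit list of binomial determinants, and then to read off the failing primes by a $p$-adic valuation computation.

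First I would cut the problem down to finitely many maps. Since $I_d$ is monomial, Proposition~\ref{pro:mono} lets us test only $\ell = x+y$. The Hilbert function of $R/I_d$ is $h(i) = \min(i+1,\,2d-1-i)$, symmetric about $d-1$ with peak $h(d-1)=d$ and $\reg{R/I_d} = 2d-2$. By Lemma~\ref{lem:slp-same-hf} it suffices to check that $\times(x+y)^t\colon [R/I_d]_a \to [R/I_d]_{a+t}$ is a bijection whenever $h(a)=h(a+t)=a+1$; the shape of $h$ forces $a \le d-1$ and $a+t = 2d-2-a$, so the only maps to examine are the $d-1$ symmetric ones $\times(x+y)^{2(d-1-a)}$ for $0 \le a \le d-2$. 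Writing $D_a = |\det N_{R/I_d}(a,2d-2-a)|$, each $D_a$ is a positive integer, and $R/I_d$ has the strong Lefschetz property exactly when $p = 0$ or $p \nmid D_a$ for every such $a$.

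Next I would make each $D_a$ explicit. For $R/I_d$ in degree $2d-2-a$ the width is full, so in the notation of Lemma~\ref{lem:square} we have $r=0$, $s=a$, $b_i = (d-1-a)+i$, and $t = 2(d-1-a)$. Setting $c = d-1-a$ (so that $t = 2c$), the closed form of Lemma~\ref{lem:square} collapses to
\[
    D_a = \frac{\left(\prod_{k=1}^{d-1-c} k!\right)\left(\prod_{j=2c}^{c+d-1} j!\right)}{\left(\prod_{j=c}^{d-1} j!\right)^{2}};
\]
as a check, $c=1$ yields $D_a = d$ and $c = d-1$ yields $D_a = \binom{2d-2}{d-1}$.

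The heart of the argument, and the step I expect to be hardest, is the arithmetic: I must show that $\min_{0 \le a \le d-2}\nu_p(D_a) = 0$ precisely when $2d-2 < p^{s}$, where $p^{s-1}\,\|\,(2d-1)(2d+1)$. Applying Legendre's formula $\nu_p(n!) = \sum_{k \ge 1}\lfloor n/p^{k}\rfloor$ to the ratio above turns each $\nu_p(D_a)$ into a signed sum of floor functions over the three factorial ranges, which by Kummer's theorem counts carries in base-$p$ additions. The largest arguments appearing are $2c \le 2d-2$ and $c+d-1 \le 2d-2$, so every carry is governed by how $2d$ sits relative to the powers of $p$; this is exactly why $(2d-1)(2d+1) = (2d)^2-1$ enters, as $p^{s-1}$ measures the $p$-divisibility of the neighbours $2d\pm 1$ that determines when the threshold $p^{s}$ clears $2d-2$. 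The remaining work is the careful digit-and-carry bookkeeping: for each prime $p$ with $p^{s} \le 2d-2$ one must exhibit an index $a$ forcing a carry (for instance $c=1$ and $c=d-1$ already account for the primes dividing $d$ and for a carry in $(d-1)+(d-1)$, while the intermediate values of $c$ supply the rest), and conversely one must verify that no carry can occur for any $a$ once $2d-2 < p^{s}$. Combined with the reduction of the first paragraph, this yields the stated equivalence, the characteristic zero case being automatic since each $D_a$ is a positive integer.
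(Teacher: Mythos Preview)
The paper does not prove this theorem; it is quoted from \cite[Theorem~4.9]{Co} and used only as input to the discussion in Section~\ref{sub:ci}. There is no proof here to compare against.

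That said, your reduction is correct and is the natural one suggested by this paper's machinery: Lemma~\ref{lem:slp-same-hf} does cut the problem down to the $d-1$ symmetric maps $a \mapsto 2d-2-a$, and your evaluation of $D_a$ via Lemma~\ref{lem:square} (including the checks at $c=1$ and $c=d-1$) is right. The genuine gap is the arithmetic step, which you describe only in outline. Invoking Legendre and Kummer and remarking that $(2d-1)(2d+1) = (2d)^2-1$ ``measures the neighbours of $2d$'' is not yet an argument; you must actually produce, for each prime $p$ with $2d-2 \ge p^s$, an index $c$ with $\nu_p(D_a) > 0$, and prove that none exists otherwise. Your two suggested witnesses $c=1$ (giving $D=d$) and $c=d-1$ (giving $D=\binom{2d-2}{d-1}$) do not suffice: take $d=8$ and $p=7$. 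Then $(2d-1)(2d+1) = 15\cdot 17$ is coprime to $7$, so $s=1$ and $2d-2 = 14 \ge 7 = p^s$, hence the theorem says the strong Lefschetz property fails; yet $7 \nmid 8$ and $7 \nmid \binom{14}{7} = 3432$ (since $7 = (1,0)_7$ has all digits at most $3$, there is no carry in $7+7$). So some intermediate $c$ must be doing the work, and identifying it --- and carrying out the full digit analysis in both directions --- is precisely the substance of the theorem. Your plan is reasonable, but the hard part remains to be executed.
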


This result implies that $I = (x^p, y^p)$, where $p$ is prime, fails to have the strong Lefschetz property
in characteristic $p$.  However, Proposition~\ref{pro:slp-small-nonmono} implies that this result does
not hold in general for non-monomial complete intersections of type $(2,2)$ and $(3,3)$.  We further checked this for
non-monomial complete intersections of type $(p,p)$, where $5 \leq p \leq 41$ is prime.

\begin{conjecture}\label{con:slp-dd-fails}
    Let $p$ be an odd prime.  Suppose $I = (x^p, x^{(p+1)/2}y^{(p-1)/2} + y^p)$ is an ideal of $R = K[x,y]$,
    where $K$ is a field of characteristic $p$.  Then $R/I$ has the strong Lefschetz property.
\end{conjecture}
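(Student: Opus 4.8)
The plan is to imitate the strategy of Proposition~\ref{pro:slp-small-nonmono}: pass to the initial ideal of $I$, establish the strong Lefschetz property there, and then transfer it back to $I$ via \cite[Proposition~2.9]{Wi}. First I would compute a reduced Gr\"obner basis of $I$ under the lexicographic (equivalently, reverse lexicographic) order with $x > y$. The leading term of $x^{(p+1)/2}y^{(p-1)/2} + y^p$ is $x^{(p+1)/2}y^{(p-1)/2}$, and the two $S$-polynomial reductions yield the elements $x^{(p-1)/2}y^p$ and $y^{(3p+1)/2}$; one checks that
\[
    \bigl\{\, x^p,\; x^{(p+1)/2}y^{(p-1)/2} + y^p,\; x^{(p-1)/2}y^p,\; y^{(3p+1)/2} \,\bigr\}
\]
is the reduced Gr\"obner basis, so $\inid{I} = (x^p,\, x^{(p+1)/2}y^{(p-1)/2},\, x^{(p-1)/2}y^p,\, y^{(3p+1)/2})$. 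The essential difficulty is already visible: this monomial ideal is \emph{not} lexsegment (for instance $x^{p-1}y$ lies lexicographically between $x^p$ and $x^{(p+1)/2}y^{(p-1)/2}$ in degree $p$, yet is absent from $\inid{I}$), so Theorem~\ref{thm:always-slp} does not apply and one cannot obtain the strong Lefschetz property for free as in Proposition~\ref{pro:slp-small-nonmono}. Instead I must prove the sharper statement that $R/\inid{I}$ has the strong Lefschetz property in the \emph{specific} characteristic $p$.

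To do this I would apply Lemma~\ref{lem:slp-same-hf} to the monomial artinian algebra $R/\inid{I}$. Since $R/I$ is a complete intersection of type $(p,p)$, its $h$-vector is $(1,2,\ldots,p-1,p,p-1,\ldots,2,1)$, peaking at degree $p-1$, so the only maps to be checked are $\times(x+y)^{2(p-1-d)} \colon [R/\inid{I}]_d \to [R/\inid{I}]_{2p-2-d}$ for $0 \le d \le p-2$, i.e.\ the nonvanishing modulo $p$ of $\det N_{R/\inid{I}}(d,2p-2-d)$. By Corollary~\ref{cor:formula-1} this determinant is $\pm 1$ whenever $\inid{I}$ is lexsegment in degree $2p-2-d$, so it remains only to treat the finitely many degrees in which $\inid{I}$ fails to be lexsegment. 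These cluster near $d+t = p$ (the gap between $x^p$ and $x^{(p+1)/2}y^{(p-1)/2}$) and near the socle degree $2p-2$ (the gap between $x^{(p-1)/2}y^p$ and $y^{(3p+1)/2}$); in each such degree Lemma~\ref{lem:square} expresses $|\det N_{R/\inid{I}}(d,2p-2-d)|$ as an explicit product, and the goal is to show that this product is coprime to $p$.

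Two instances indicate that the computation collapses favourably. For the socle map ($d = 0$) the determinant is the single binomial coefficient $\binom{2p-2}{(p-3)/2}$, which is nonzero modulo $p$ by Lucas's theorem. For the widest gap, the degree-$p$ map ($d = p-2$, $t = 2$), the two products of Lemma~\ref{lem:square} telescope against one another and leave exactly $(p-1)/2$, again coprime to $p$. The main obstacle is to carry this out \emph{uniformly}: one must enumerate every non-lexsegment degree of $\inid{I}$ for general $p$, read off the exponent sequence $b_0 < \cdots < b_d$ of the standard monomials in that degree, and prove that the resulting product from Lemma~\ref{lem:square} has $p$-adic valuation zero. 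Here the structural bound $w_{R/\inid{I}}(2p-2) - 1 = 2p-3 < 2p$ (from Proposition~\ref{pro:classify-width}) is the crucial leverage: it forces each difference $b_{j} - b_{i}$ into $[1,2p-3]$, so $p$ can enter the product only as a difference equal to $p$ or through the factorial quotients, and excluding both possibilities by a Kummer/Lucas valuation analysis is exactly the delicate step that has so far only been verified by machine for $5 \le p \le 41$. Once it is in hand, Lemma~\ref{lem:slp-same-hf} yields the strong Lefschetz property for $R/\inid{I}$ in characteristic $p$, and \cite[Proposition~2.9]{Wi} transfers it to $R/I$, proving the conjecture.
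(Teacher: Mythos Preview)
The statement you are addressing is labeled a \emph{conjecture} in the paper, and the paper offers no proof of it whatsoever.  What the paper does provide is exactly the preliminary computation you carry out: Remark~\ref{rem:slp-dd-fails} records the reduced Gr\"obner basis $\{x^p, x^{(p+1)/2}y^{(p-1)/2} + y^p, x^{(p-1)/2}y^p, y^{(3p+1)/2}\}$, the resulting initial ideal, and the observation that $\inid{I}$ is lexsegment only for $p=3$.  Beyond that the paper simply reports machine verification for $5 \le p \le 41$ and leaves the general statement open.

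Your proposal is therefore not comparable to a proof in the paper, because there is none.  As a strategy it is sound and uses the right ingredients (Lemma~\ref{lem:slp-same-hf}, Lemma~\ref{lem:square}, Corollary~\ref{cor:formula-1}, and the transfer via \cite[Proposition~2.9]{Wi}), and your reduction to checking $\det N_{R/\inid{I}}(d,2p-2-d) \not\equiv 0 \pmod p$ for each $0 \le d \le p-2$ is correct for the symmetric $h$-vector of a type-$(p,p)$ complete intersection.  But you yourself identify the gap: the ``delicate step'' of showing, uniformly in $p$, that the explicit product from Lemma~\ref{lem:square} has $p$-adic valuation zero in every non-lexsegment degree is not carried out, and you note it has only been checked computationally for $5 \le p \le 41$---which is precisely the evidence the paper already cites for the conjecture.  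So what you have written is a coherent plan of attack, not a proof; the conjecture remains open after your proposal exactly as it does in the paper.
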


\begin{remark}\label{rem:slp-dd-fails}
    Clearly the set $G = \{x^p, x^{(p+1)/2}y^{(p-1)/2} + y^p, x^{(p-1)/2}y^p, y^{(3p+1)/2}\}$ is
    a reduced Gr\"obner basis for $I$ as in the preceding conjecture, regardless of the characteristic of
    $K$.  Hence $\inid{I} = (x^p, x^{(p+1)/2}y^{(p-1)/2}, x^{(p-1)/2}y^p, y^{(3p+1)/2})$, and so $\inid{I}$
    is lexsegment if and only if $p = 3$.
\end{remark}

From the examples in Proposition~\ref{pro:slp-small-nonmono} and Conjecture~\ref{con:slp-dd-fails}, if true,
we see that the absence of the strong Lefschetz property for \emph{monomial} complete intersections of a
fixed type does not imply the same for \emph{non-monomial} complete intersections of the same type.

\subsection{Families of non-intersecting lattice paths}\label{sub:nilps}~

Let $I$ be a monomial artinian ideal in $R = K[x,y]$, and let $d$ and $t$ be integers such that
$0 \leq d < \indeg{I}$ and $\indeg{I} \leq d+t < \reg{R/I}$.  The matrix $N_{R/I}(d,d+t)$ can be
identified with a matrix involving lattice paths in a particular sub-lattice of $\ZZ^2$.  Moreover,
if $N_{R/I}(d,d+t)$ is square, then its determinant provides a meaningful interpretation.  We first
must make several definitions.

A \emph{lattice path} in a sub-lattice $L \subset \ZZ^2$ is a finite sequence of vertices of $L$ so
that all single steps move either to the right (horizontal) or up (vertical). Given any two vertices
$A = (u,v), E = (x,y) \in \ZZ^2$, the number of lattice paths in $\ZZ^2$ from $A$ to $E$ is the
binomial coefficient $\binom{x-u+y-v}{x-u}$, as each lattice path has precisely $x-u+y-v$ steps and
$x-u$ of these must be horizontal steps.  (Note that if $E$ is left of or below $A$, then the number of
lattice paths is zero.)

Let $L_{R/I}$ be the finite sub-lattice of $\ZZ^2$ consisting of the point $(i,j)$, where $x^i y^j$
is not in $I$, that is, $x^i y^j$ is non-zero in $R/I$.  We denote by $L_{R/I}(d,d+t)$ the sub-lattice
$L_{R/I}$ with two sets of distinguished vertices:
\begin{enumerate}
    \item the vertices of $L_{R/I}$ along the line $x+y=d$ are labeled $A_i = (a_i, d - a_i)$,
        where $0 \leq i \leq m$ and $a_0 < \cdots < a_m$, and
    \item the vertices of $L_{R/I}$ along the line $x+y=d+t$ are labeled $E_j = (b_j, d+t - b_j)$,
        where $0 \leq j \leq n$ and $b_0 < \cdots < b_n$.
\end{enumerate}
By careful observation we notice that the distinguished vertices correspond precisely to the monomials
spanning $[R/I]_{d}$ and $[R/I]_{d+t}$, respectively.

The \emph{lattice path matrix} of $L_{R/I}(d,d+t)$ is the $(m+1) \times (n+1)$ matrix $N = N(L_{R/I}(d,d+t))$
with entries $N_{(i,j)}$ defined to be the number of lattice paths in $\ZZ^2$ from $A_i$ to $E_j$, i.e.,
$\binom{b_j - a_i + (d+t - b_j) - (d - a_i)}{b_j - a_i} = \binom{t}{b_j - a_i}$.   Clearly then,
$N(L_{R/I}(d,d+t)) = N_{R/I}(d,d+t)$.

A \emph{family of non-intersecting lattice paths} is a finite collection of lattice paths such that no
two lattice paths in the family have common points.  Moreover, when $m = n$ the matrix
$N(L_{R/I}(d,d+t)) = N_{R/I}(d,d+t)$ is square, and its determinant is given in Lemma~\ref{lem:square}.
We use a theorem first given by Lindstr\"om in~\cite[Lemma~1]{Li}
and stated independently in~\cite[Theorem~1]{GV} by Gessel and Viennot to interpret this determinant.

\begin{theorem}{\cite[Lemma~1]{Li} \& \cite[Theorem~1]{GV}} \label{thm:lgv}
    Let $A_1, \ldots, A_m$ and $E_1, \ldots, E_m$ be distinguished vertices of $\ZZ^2$.  Define the $m \times m$
    matrix $N$ to have entry $N_{(i,j)}$ defined to be the number of lattice paths in $\ZZ^2$ from $A_i$ to $E_j$.
    Then
        \[
            \det{N} = \sum_{\lambda \in \PS_m} \sgn(\lambda) P^+_\lambda(A\rightarrow E)
        \]
        where, for each permutation $\lambda \in \PS_m$, $P^+_\lambda(A \rightarrow E)$ is the number of families of
        non-intersecting lattice paths going from $A_i$ to $E_{\lambda(i)}$.
\end{theorem}

\begin{example}\label{exa:lattice}
    Recall that in Example~\ref{exa:matrix-N}, the matrix $N_{R/I}(d,d+t)$ was given, where
    $I = (x^{10}, y^7)$ and $d = t = 5$.  Figure~\ref{fig:lattice} shows the lattice
    $L_{R/I}(d,d+t)$ with the distinguished vertices marked and a family of non-intersecting
    lattice paths in grey.  By Example~\ref{exa:matrix-N}, the determinant of $N_{R/I}(d,d+t)$
    is $210$, so there are $210$ families of non-intersecting lattice paths from $A_i$ to $E_i$
    in $L_{R/I}(d,d+t)$.

    \begin{figure}[!ht]
        \includegraphics[scale=0.85]{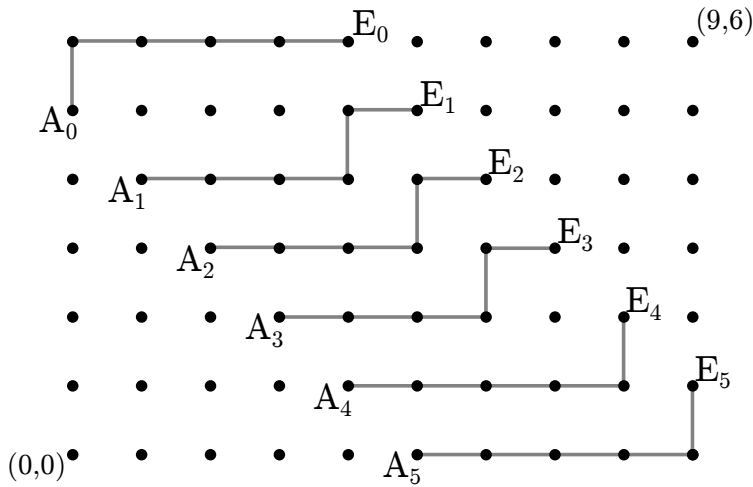}
        \caption{The lattice $L_{R/(x^{10},y^7)}(5,10)$ with distinguished vertices marked
            and a family of non-intersecting lattice paths shown in grey.}
        \label{fig:lattice}
    \end{figure}
\end{example}

Notice that when $m = n$, i.e., the number of $A_i$ vertices is the same as the number of $E_j$ vertices, then
the path starting at $A_i$ in a family of non-intersecting lattice paths must end at $E_i$, for $0 \leq i \leq m$.
Hence by Theorem~\ref{thm:lgv}, $\det{N(L_{R/I}(d,d+t))}$ is the number of non-intersecting lattice paths
in $L_{R/I}(d,d+t)$.

\subsection{Connections to the weak Lefschetz property in codimension three}\label{sub:codim3}~

The connection between the strong Lefschetz property in codimension two and families of non-intersecting
lattice paths described in Section~\ref{sub:nilps} is similar to the connection of the latter to the
weak Lefschetz property in codimension \emph{three}, as described in~\cite{CN}.  Thus we see that the
strong Lefschetz property in codimension two is intimately related to the weak Lefschetz property in
codimension three.

In particular, let $I$ be a monomial artinian ideal in $R = K[x,y]$.  We see that the multiplication map
$\times(x+y)^t: [R/I]_d \rightarrow [R/I]_{d+t}$ has maximal rank exactly when the multiplication map
$\times(x+y+z): [S/J]_{d+t-1} \rightarrow [S/J]_{d+t}$ has maximal rank, where $J = I + (z^t)$ is an ideal of
$S = K[x,y,z]$ (where we abuse notation to see $I$ as a monomial ideal of $S$).  This follows as their cokernels,
$[R/(I, (x+y)^t)]_{d+t}$ and $[S/(J, x+y+z)]_{d+t}$, are isomorphic.
This connection has been used more generally; see, e.g., \cite{Ch}, \cite{Co}, \cite{MM}, and \cite{MMN-2}.

Using the above connection and Theorem~\ref{thm:slp-char0-reg}, we can bound the failure of the weak Lefschetz property
for certain codimension three monomial ideals.

\begin{proposition}\label{pro:wlp-3var}
    Let $J$ be a monomial artinian ideal in $S = K[x,y,z]$, where exactly one generator of $J$
    is divisible by $z$, up to a change of variables.  Then $S/J$ has the weak Lefschetz property
    if the characteristic of $K$ is at least $\reg{S/J}$.
\end{proposition}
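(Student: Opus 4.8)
The plan is to reduce to the two-variable situation of Section~\ref{sub:codim3} and then invoke Theorem~\ref{thm:slp-char0-reg}. First I would normalise the hypothesis: after permuting the variables (which leaves the symmetric element $x+y+z$, and hence Proposition~\ref{pro:mono}, unaffected) I may assume $z$ is the distinguished variable, so that exactly one minimal generator $g$ of $J$ is divisible by $z$. Since $S/J$ is artinian we have $z^N \in J$ for some $N$, and because $J$ is monomial this power is divisible by a minimal generator; as a generator free of $z$ cannot divide $z^N$, the only candidate is $g$, forcing $g = z^t$ to be a pure power for some $t \geq 1$. The remaining generators involve only $x$ and $y$ and generate an ideal $I \subset R = K[x,y]$ with $J = IS + (z^t)$; moreover $I$ is artinian in $R$, since the pure powers $x^N, y^N \in J$ cannot be multiples of $z^t$ and hence already lie in $I$. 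Thus $S/J \cong (R/I) \otimes_K K[z]/(z^t)$ as graded algebras.

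By Proposition~\ref{pro:mono} it suffices to show that $x+y+z$ is a weak Lefschetz element, i.e.\ that $\times(x+y+z)\colon [S/J]_{e-1} \to [S/J]_e$ has maximal rank for every $e$. Following Section~\ref{sub:codim3}, the substitution $z \mapsto -(x+y)$ gives a graded isomorphism $S/(J, x+y+z) \cong R/(I, (x+y)^t)$ valid in every degree, so the cokernel of $\times(x+y+z)\colon [S/J]_{e-1} \to [S/J]_e$ and the cokernel of $\times(x+y)^t\colon [R/I]_{e-t} \to [R/I]_e$ have equal dimension. A linear map has maximal rank exactly when its cokernel is as small as the dimension count permits, so the first map has maximal rank iff its cokernel has dimension $\max(0,\, h_{S/J}(e) - h_{S/J}(e-1))$ and the second iff its cokernel has dimension $\max(0,\, h_{R/I}(e) - h_{R/I}(e-t))$. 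Since $h_{S/J}(e) = \sum_{k=0}^{t-1} h_{R/I}(e-k)$, these two differences telescope to the common value $h_{R/I}(e) - h_{R/I}(e-t)$; hence the two maximal-rank conditions are equivalent in every degree $e$, which is the connection of Section~\ref{sub:codim3} now read in all degrees rather than only for $e \geq t$. Consequently $S/J$ has the weak Lefschetz property precisely when every map $\times(x+y)^t$ on $R/I$ has maximal rank.

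It remains to guarantee those maps have maximal rank. If $t = 1$ this is exactly the weak Lefschetz property of $R/I$, which holds unconditionally by Theorem~\ref{thm:wlp}. If $t \geq 2$ I would instead verify the full strong Lefschetz property of $R/I$. The key input is the regularity shift $\reg{R/I} + t - 1 = \reg{S/J}$, which follows because the top nonzero degree of $S/J \cong (R/I) \otimes_K K[z]/(z^t)$ is attained by multiplying the top monomial of $R/I$ by $z^{t-1}$. Then the hypothesis gives $p \geq \reg{S/J} = \reg{R/I} + (t-1) \geq \reg{R/I} + 1$, so $p > \reg{R/I}$ and $R/I$ has the strong Lefschetz property by Theorem~\ref{thm:slp-char0-reg} (the characteristic-zero case being immediate); in particular every map $\times(x+y)^t$ has maximal rank, and $S/J$ has the weak Lefschetz property. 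The hard part will be the bookkeeping that pushes the Section~\ref{sub:codim3} equivalence through the low degrees $e < t$ and pins down the shift $\reg{S/J} = \reg{R/I} + t - 1$ exactly, since it is this shift that upgrades the non-strict bound $p \geq \reg{S/J}$ into the strict bound $p > \reg{R/I}$ required by Theorem~\ref{thm:slp-char0-reg}.
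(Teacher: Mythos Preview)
Your argument is correct and follows the same route as the paper: reduce to the two-variable ideal $I$ with $J = IS + (z^t)$, use the cokernel identification of Section~\ref{sub:codim3} to equate the weak Lefschetz property of $S/J$ with maximal rank of $\times(x+y)^t$ on $R/I$, and then invoke Theorem~\ref{thm:slp-char0-reg}. Your treatment is in fact more careful than the paper's own proof: by splitting off the case $t=1$ (handled by Theorem~\ref{thm:wlp}) and using the exact shift $\reg{S/J} = \reg{R/I} + t - 1$ for $t \geq 2$, you correctly obtain the \emph{strict} inequality $p > \reg{R/I}$ that Theorem~\ref{thm:slp-char0-reg} actually requires, whereas the paper writes only $p \geq \reg{R/I}$.
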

\begin{proof}
    Since $J$ is artinian and has exactly one generator divisible by $z$, then that generator must
    be $z^t$ for some positive $t$.  Moreover, the remaining generators of $J$ must be monomials
    in $x$ and $y$ only; let $I$ be the ideal generated by these monomials.  Then $I$ can be
    seen as a monomial artinian ideal in $R = K[x,y]$.

    By Theorem~\ref{thm:slp-char0-reg}, $R/I$ has the strong Lefschetz property if $\charf{K} \geq \reg{R/I}$.
    Moreover, $\reg{S/J} \geq \reg{R/I}$, so $R/I$ has the strong Lefschetz property if
    $\charf{K} \geq \reg{R/J}$.  Thus we see that $S/J$ has the weak Lefschetz property if the
    characteristic of $K$ is at least $\reg{S/J}$.
\end{proof}

\begin{acknowledgement}
    The author would like to thank Juan Migliore and Uwe Nagel for many discussions related to
    the results contained herein and for help in the presentation of this manuscript.

    Further, the author would like to thank Tony Iarrobino for pointing out the earlier results
    that give Theorem~\ref{thm:slp-char0-reg}.
\end{acknowledgement}


\end{document}